\def \R{\mathbb{R}}
\newtheorem{theorem}{Theorem}[section]
\newtheorem{cor}[theorem]{Corollary}
\newtheorem{lema}[theorem]{Lemma}
\newtheorem{rem}[theorem]{Remark}
\newtheorem{remark}[theorem]{Remark}
\newtheorem{prop}[theorem]{Proposition}
\newtheorem{thm}{Theorem}
\definecolor{red}{rgb}{1,0,0}
\def\R{\mathbb{R}}
\begin{document}

\title[Mok's characteristic varieties and the normal holonomy group]
{Mok's characteristic varieties and the normal holonomy group}

\author[A. J. Di Scala]{Antonio J. Di Scala}
\author[F. Vittone]{Francisco Vittone}

\subjclass{Primary 53C29, 53C40}
\keywords{Normal holonomy group, symmetric domain, Mok's characteristic, Positive Jordan Triple System, minimal tripotent}

\date{\today}

\dedicatory{Dedicated to the memory of Professor Guy Roos}

\maketitle

\begin{abstract}
In this paper we complete the study of the normal holonomy groups of complex submanifolds (non nec. complete) of $\mathbb{C}^n$ or $\mathbb{C}\mathbb{P}^n$.  We show that irreducible but non transitive normal holonomies are exactly the Hermitian $s$-representations of \cite[Table 1]{CD} (see Corollary \ref{tabla}). For each one of them we construct a non necessarily complete complex submanifold whose normal holonomy is the prescribed s-representation.

We also show that if the submanifold has irreducible non transitive normal holonomy then it is an open subset of the smooth part of one of the characteristic varieties studied by N. Mok in his work about rigidity of locally symmetric spaces.

Finally, we prove that if the action of the normal holonomy group of a projective submanifold is reducible then the submanifold is an open subset of the smooth part of a so called join, i.e. the union of the lines joining two projective submanifolds.
\end{abstract}

\section{Introduction}

Given a submanifold $M$ of a Riemannian manifold $N$, the normal bundle $\nu M$ carries a natural connection $\nabla^{\perp}$ defined
as the projection of the Levi-Civita connection on the tangent bundle $\mathrm{T}N$ to the normal bundle $\nu M$.
The \textsl{normal holonomy group} $Hol(M,\nabla^{\bot})$ of $M$ is the holonomy group of the connection $\nabla^{\perp}$. Its connected component of the identity $Hol^*(M,\nabla^{\bot})$ is the \textsl{restricted normal holonomy group} of $M$.

In \cite{Ol90} C. Olmos proved the \emph{Normal Holonomy Theorem} for submanifolds of real space forms.
It asserts that the action of the restricted normal holonomy group on any normal space $\nu_p M$ is the holonomy representation of a
Riemannian symmetric space. This result plays an important role in the theory of isoparametric submanifolds (see \cite{BCO}).

A similar result  for complex submanifolds of complex space forms was proved in \cite{AD} and improved in \cite{DV}. Namely, if $M$ is a full complex submanifold of a complex space form, then its restricted normal holonomy group acts on each normal space as the isotropy representation of a Hermitian symmetric space without flat factor.

\medskip
Therefore a natural problem arises: to determine which among all isotropy representations of  Hermitian symmetric spaces is the normal holonomy of a complex submanifold of $\mathbb{C}^n$ or $\mathbb{C}\mathbb{P}^n$. That is to say, for each Hermitian isotropy representation to decide if there exists a complex submanifold with it as normal holonomy.

\medskip

If the normal holonomy group of a complex submanifold $M$ of $\mathbb{C}^n$ or $\mathbb{C}\mathbb{P}^n$ acts transitively, on the unit sphere of the normal space, then  it must be the whole unitary group $U(k)$, where $k$ is the codimension of $M$. Therefore it is interesting to study submanifolds whose normal holonomy is \textsl{non transitive}.

Concrete non trivial examples were studied in \cite{CD}, where the normal holonomy group of complex parallel submanifolds of the complex projective space were computed.  The classification and realization problems for complete irreducible complex submanifolds were completely solved in \cite{CDO}.

 In \cite[Theorem 2]{CDO} it was proved that a complete, irreducible and full complex submanifolds of $\mathbb{C}^n$ must have transitive normal holonomy.
For the case of complete complex submanifolds of the projective space, it was proved a Berger type theorem.
The main result asserts that the action of the normal holonomy group is non transitive if and only if the submanifold is the complex orbit of the isotropy representation of an irreducible Hermitian symmetric space of rank greater than 2.
 Notice that such complex orbits are actually the first characteristic varieties, studied by N. Mok \cite{MokLibro}, whose normal holonomies are those explicitly computed in \cite{CD}.

It is important to point out that both in the case of a submanifold of $\mathbb{C}^n$ or of $\mathbb{C}\mathbb{P}^n$, these results are false if the completeness of the submanifold is not assumed (cf. \cite[Section 5]{CDO}).

So it is natural to ask which non transitive normal holonomy representations can appear if the complex submanifold is not complete.
Or more generally, if the non transitivity of the normal holonomy of a non complete complex submanifold implies that the submanifold belongs to a short list of submanifolds as in the case of being complete.

\medskip

The answer to these questions are the main results of this paper: the normal holonomy of an irreducible complex submanifold of $\mathbb{C}^n$ or $\mathbb{C}\mathbb{P}^n$ is non transitive if and only if the submanifold is an open subset of one of the (cones over a) Mok's characteristic variety studied in \cite{MokLibro}. Namely:

\begin{thm}\label{Euclideo}
Let $M \subset \mathbb{C}^n$ be a full and irreducible complex submanifold (non necessarily complete w.r.t. the induced metric of $\mathbb{C}^n$). Let $Hol^*(M, \nabla^{\perp})$ be the restricted normal holonomy group of $M$.
If the action of $Hol^*(M, \nabla^{\perp})$  is non transitive on the unit sphere of the normal space then there exists an irreducible bounded symmetric domain $D \subset \mathbb{C}^{n}$ (realized as a circled domain) such that $M$
is an open subset of the smooth part of the  Mok's characteristic cone $\mathcal{C}S^j(D)$ for $1 \leq j < rank(D) - 1$.

Conversely, for any irreducible bounded symmetric domain $D\subset \mathbb{C}^n$,  the restricted normal holonomy group of an open subset of the smooth part of the cone $\mathcal{C}S^j(D)$ for $1 \leq j < rank(D) - 1$ acts irreducibly but non transitively on the unit sphere of each normal space.
\end{thm}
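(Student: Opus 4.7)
\medskip
\noindent\textbf{Plan of proof.}
The statement has two directions and I would handle them separately. For the forward direction, the plan is to first invoke the improved normal holonomy theorem for complex submanifolds from \cite{DV}, which guarantees that $Hol^*(M,\nabla^{\perp})$ acts on each normal space $\nu_pM$ as the isotropy representation of a Hermitian symmetric space without flat factor. Since the action is assumed to be non transitive on the unit sphere, it cannot be the full unitary group $U(k)$, and by Corollary \ref{tabla} it must appear in the list of Hermitian $s$-representations of \cite[Table 1]{CD}. Each of these representations corresponds to a Positive Hermitian Jordan Triple System, hence to a unique (up to isomorphism) irreducible bounded symmetric domain $D$, together with a distinguished integer $j$ coming from the rank of the tripotent whose Peirce decomposition produces that specific $s$-representation.

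The core of the forward direction is then to transfer this Jordan-theoretic structure from the (abstract) normal space back to an honest geometric identification of $M$ inside $\mathbb{C}^n$. The idea is to use that, by the Ricci and Codazzi equations together with the reduction theorem for submanifolds, the second fundamental form of $M$ is forced to be equivariant with respect to the normal holonomy action; combined with the homogeneity of the orbit structure of the relevant $s$-representation, this should produce at each point $p\in M$ a distinguished tripotent $e_p\in\mathbb{C}^n$ of rank $j$ such that $T_pM$ coincides with the sum of the Peirce eigenspaces of $e_p$ for the eigenvalues $1$ and $1/2$, while $\nu_pM$ matches the $0$-eigenspace together with $\mathbb{C}e_p$. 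This is exactly the infinitesimal model of $\mathcal{C}S^j(D)$ at a smooth point, so by an analyticity/parallel-transport argument one concludes that $M$ is an open subset of the smooth part of $\mathcal{C}S^j(D)$. The restriction $j<\mathrm{rank}(D)-1$ should drop out automatically: larger values of $j$ would force the holonomy to act transitively, contrary to hypothesis.

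The main obstacle, I expect, is precisely this reconstruction step: turning the pointwise information carried by the normal holonomy into a global algebraic identification of $M$ with a piece of $\mathcal{C}S^j(D)$. This requires arguing that the family $\{e_p\}_{p\in M}$ of tripotents is coherent along $M$ and that the resulting map into the characteristic variety is an open embedding, which seems to depend on a careful study of the parallel transport of the second fundamental form and on the uniqueness of the PJTS realizing a given Hermitian $s$-representation.

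For the converse direction the approach is direct computation. Fix an irreducible bounded symmetric domain $D\subset\mathbb{C}^n$ (realized as a circled domain) and an integer $1\le j<\mathrm{rank}(D)-1$. At a smooth point $p$ of $\mathcal{C}S^j(D)$, represented by a tripotent $e$ of rank $j$, the tangent and normal spaces are explicitly described by the Peirce decomposition of the PJTS of $D$, and the isotropy group $K$ of $D$ acts transitively on the smooth stratum $S^j(D)$, so the normal holonomy at $p$ is controlled by the isotropy subgroup of $e$ acting on $\nu_pM$. A case-by-case inspection identifies this isotropy representation with one of the entries of \cite[Table 1]{CD}, which is irreducible and non transitive; the strict bound $j<\mathrm{rank}(D)-1$ is precisely what rules out the degenerate case in which the orbit becomes open and the holonomy collapses to the full unitary group.
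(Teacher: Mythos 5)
Your plan for the forward direction leaves the decisive step unproved, and it is not the step the paper actually takes. Knowing from \cite{AD,DV} that $Hol^*(M,\nabla^{\perp})$ acts as a non-transitive Hermitian $s$-representation tells you nothing, by itself, about how $M$ sits inside $\mathbb{C}^n$; the ``reconstruction'' you describe --- producing a coherent family of tripotents $e_p$ from an equivariance property of the second fundamental form --- is exactly the hard part, you flag it yourself as the main obstacle, and you give no argument for it. The paper does not attempt this reconstruction: it quotes the Berger-type structure theorem \cite[Theorem 4]{CDO}, which already asserts that a full irreducible complex submanifold with non-transitive normal holonomy is, locally around a generic point $q$, a union of parallel orbits $\bigcup_{v}(K\cdot q)_{v}$ of the isotropy group $K$ of an irreducible bounded symmetric domain $D$. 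From there the identification with $\mathcal{C}S^j(D)_{sm}$ follows because $M$, being complex, is locally invariant under the complexification $\Gamma$ of $K$, whose orbits are exactly the sets of vectors of fixed rank (Lemma \ref{lemmarankopen} together with \cite{KA}), and analyticity globalizes the local statement. Note also that invoking Corollary \ref{tabla} in the forward direction is circular: that corollary is deduced from the theorem you are proving.

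For the converse your outline contains a concrete error: the isotropy group $K$ does \emph{not} act transitively on $S^j(D)_{sm}$ (only its complexification $\Gamma$ does), so the normal holonomy is not simply ``the isotropy subgroup of a tripotent acting on the normal space'' by homogeneity, and some actual holonomy computation is unavoidable. The paper's computation goes through the local description of $\mathcal{C}S^j(D)_{sm}$ as a union of parallel submanifolds $E_j=\bigcup_{\xi}(K\cdot y_j)_{\xi}$ with $y_j=e_1+\mu_2e_2+\cdots+\mu_je_j$, and uses Lemma \ref{lemaholo} together with the iterated holonomy-tube result (Theorem \ref{holiterada}) and the Jordan-theoretic Lemmas \ref{KV0} and \ref{V0} to identify the normal holonomy, uniformly and with no case-by-case inspection, as the full isotropy group of the bounded symmetric domain attached to the simple PHJTS $V_0(e_1+\cdots+e_j)$, which has rank $rank(D)-j\ge 2$ and is therefore irreducible and non-transitive. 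Finally, your explanation of the bound $j<rank(D)-1$ is off: at $j=rank(D)-1$ the variety is still a proper subvariety, but $V_0(e_1+\cdots+e_{j})$ then has rank $1$ and the normal holonomy becomes transitive; the orbit only becomes open at $j=rank(D)$.
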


\begin{thm}{\label{Projectivo}} Let $M \subset \mathbb{C}\mathbb{P}^n$ be a full complex submanifold (non nec. complete) whose restricted normal holonomy group acts irreducibly on the normal space. If this action is non transitive on the unit sphere of the normal space, then there exists an irreducible bounded symmetric domain $D \subset \mathbb{C}^{n+1}$ (realized as circled domain) such that $M$
is an open subset of the smooth part of the Mok's characteristic variety $S^j(D)$ for $1 \leq j < rank(D) - 1$.

Conversely, the normal holonomy group of an open subset of the smooth part of the Mok's characteristic variety $S^j(D)$ for $1 \leq j < rank(D) - 1$ acts irreducibly but not transitively on the unit sphere of the normal space.
\end{thm}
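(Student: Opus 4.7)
My plan is to deduce Theorem \ref{Projectivo} from the already-established Theorem \ref{Euclideo} by passing to affine cones. To any complex submanifold $M \subset \mathbb{C}\mathbb{P}^n$ I associate its affine cone $\hat{M} = \pi^{-1}(M) \cup \{0\} \subset \mathbb{C}^{n+1}$, where $\pi : \mathbb{C}^{n+1}\setminus\{0\} \to \mathbb{C}\mathbb{P}^n$ is the Hopf projection. Since $\hat{M}$ is $\mathbb{C}^*$-invariant, it is a complex submanifold at each of its smooth points. The central fact to set up, and the main technical obstacle, is a holonomy correspondence: at any $v$ in the smooth locus of $\hat{M}$, the natural isomorphism between the flat normal space $\nu_v\hat{M} \subset \mathbb{C}^{n+1}$ and the Fubini-Study normal space $\nu_{[v]}M$ intertwines the restricted normal holonomy representations of $\hat{M}$ at $v$ and of $M$ at $[v]$.

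Assuming this correspondence, the forward direction runs as follows. Suppose $M$ is full in $\mathbb{C}\mathbb{P}^n$ with irreducible but non-transitive normal holonomy. Then $\hat{M}$ is full in $\mathbb{C}^{n+1}$ (projective fullness of $M$ is equivalent to linear fullness of $\hat{M}$), and by the correspondence its normal holonomy is also irreducible and non-transitive, so in particular $\hat{M}$ is an irreducible submanifold. Applying Theorem \ref{Euclideo} produces an irreducible bounded symmetric domain $D \subset \mathbb{C}^{n+1}$ and an index $1 \le j < rank(D)-1$ with $\hat{M}$ an open subset of the smooth part of $\mathcal{C}S^j(D)$. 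Since $\mathcal{C}S^j(D)$ is by definition the affine cone over $S^j(D)$, projecting back recovers $M$ as an open subset of the smooth part of $S^j(D)$. The converse direction goes in reverse: lift an open subset of the smooth part of $S^j(D)$ to an open subset of the smooth part of $\mathcal{C}S^j(D)$, invoke Theorem \ref{Euclideo} to get irreducible non-transitive normal holonomy on the cone, and transfer back through the correspondence.

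The main difficulty is a careful proof of the holonomy correspondence, because the two normal bundles live in different ambient geometries (the flat Hermitian structure on $\mathbb{C}^{n+1}$ versus the Fubini-Study structure on $\mathbb{C}\mathbb{P}^n$). I would proceed by comparing normal parallel transport along a loop $\hat{\gamma}$ in $\hat{M}$ at $v$ with that along its projection $\pi \circ \hat{\gamma}$ in $M$ at $[v]$: the radial component of $\hat{\gamma}$ contributes nothing, because $\hat{M}$ is $\mathbb{C}^*$-invariant and so the normal subbundle is literally constant along a ray, and for the remaining motion one uses the O'Neill formulas for the Riemannian submersion $\pi$ to identify the horizontal component of normal parallel transport in $\hat{M}$ with the Fubini-Study normal parallel transport in $M$. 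Once the correspondence is in place, routine checks confirm that fullness, submanifold irreducibility, smoothness, and the range $1 \le j < rank(D)-1$ all match up under the cone/projection pair.
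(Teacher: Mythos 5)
Your proposal follows essentially the same route as the paper: pass to the affine cone, transfer the normal holonomy representation between $M$ and its cone, and apply Theorem \ref{Euclideo}. The holonomy correspondence you identify as the main technical obstacle is exactly what the paper imports as a known fact from \cite[Remark 5, page 211]{CDO}, so your sketch of its proof via the radial invariance and the submersion structure is a reasonable expansion of a step the paper simply cites.
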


 As it will become clear in the proof of the above theorems, the normal holonomy of a $j^{th}$ Mok's characteristic variety $S^j(D)$, over a bounded symmetric domain $D$, always coincide with the normal holonomy of a first Mok's characteristic variety $S^1(D')$ (over a suitable bounded symmetric domain $D'$ different from $D$). Therefore, combining the above theorems with the results in \cite{CD}, we get the following classification result:

\begin{cor}\label{tabla} Let $M$ be a full complex submanifold  (non nec. complete) of either $\mathbb{C}^n$ or $\mathbb{C}\mathbb{P}^n$ whose normal holonomy is irreducible.
Then the normal holonomy group representation is either the full unitary group of the normal bundle $\mathrm{U}(\nu_pM)$ or one of the following
isotropy representations $K \hookrightarrow SO(V)$ of
compact irreducible Hermitian symmetric spaces $G/K$:

\medskip

\begin{center}

\begin{tabular}{|c|c|c|}
\hline
&&\\[-3.7mm]
\hspace*{10mm} $G/K$ \hspace*{10mm} & \hspace*{10mm} $K$
\hspace*{10mm} & \hspace*{15mm} $ V \hspace*{15mm}$
\\[0.3mm]
\hline
&& \\[-3.7mm]
$SU(p+q)/S(U(p)\times U(q))$, $p,q > 1$ & $S(U(p)\times
U(q))$
& $ \mathbb{C}^p \otimes \mathbb{C}^q$ \\[0.3mm]
\hline
&& \\[-3.7mm]
$SO(2n)/U(n)$, $n>3$ & $U(n)$ & $\Lambda^2(\mathbb{C}^n) $ \\[0.3mm]
\hline
&& \\[-3.7mm]
$SO(12)/SO(2)\times SO(10)$ & $SO(2)\times SO(10)$
&  $\mathbb{R}^2 \otimes \mathbb{R}^{10} $  \\[0.3mm]
\hline
&& \\[-3.7mm]
$Sp(n)/U(n)$, $n>1$ & $U(n)$
&  $S^2\mathbb{C}^n $  \\[0.3mm]
\hline

\end{tabular}
\end{center}
\medskip

 Moreover, any of this isotropy representations can be realized as the normal holonomy of a complex submanifold
as explained in Theorems \ref{Euclideo} and \ref{Projectivo}.

\end{cor}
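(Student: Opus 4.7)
The plan is to deduce Corollary \ref{tabla} almost immediately from Theorems \ref{Euclideo} and \ref{Projectivo} together with the explicit computation of the normal holonomies of first Mok characteristic varieties carried out in \cite{CD}. First, if the restricted normal holonomy $\Hol^*(M,\nabla^{\perp})$ acts transitively on the unit sphere of a (equivalently, every) normal space, then being a closed connected subgroup of the unitary group $\mathrm{U}(\nu_p M)$ acting transitively on the sphere, it must equal $\mathrm{U}(\nu_p M)$ itself; this is the alternative already recalled in the Introduction. So we may assume throughout the rest of the argument that $\Hol^*(M,\nabla^{\perp})$ acts irreducibly but not transitively on $\nu_p M$.

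Under this hypothesis, Theorem \ref{Euclideo} (when $M\subset\mathbb{C}^n$) or Theorem \ref{Projectivo} (when $M\subset\mathbb{C}\mathbb{P}^n$) produces an irreducible bounded symmetric domain $D$ and an integer $1\le j<\mathrm{rank}(D)-1$ such that $M$ is an open subset of the smooth part of the Mok characteristic variety $S^j(D)$ (or of its affine cone $\mathcal{C}S^j(D)$). The key observation, stated in the paragraph preceding the corollary and established inside the proofs of those two theorems, is that the normal holonomy of $S^j(D)$ coincides with the normal holonomy of a first characteristic variety $S^1(D')$ attached to a suitable auxiliary bounded symmetric domain $D'$. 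This reduces the whole computation to the case $j=1$, in which $S^1(D')$ is a complex parallel submanifold, namely the projectivization of the orbit of a minimal tripotent in the Positive Jordan Triple System associated with $D'$. The normal holonomies of these parallel submanifolds were tabulated explicitly in \cite[Table 1]{CD}, and as one runs over all irreducible bounded symmetric domains $D'$ of rank at least $2$ one obtains precisely the four isotropy representations $K\hookrightarrow SO(V)$ displayed in the table, so the classification follows.

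For the realization part, given any of the four listed representations, pick the irreducible Hermitian symmetric domain $D$ whose compact dual $G/K$ has the prescribed $K\hookrightarrow SO(V)$ as isotropy representation, and let $M$ be an open subset of the smooth part of $S^1(D)$ in the projective setting or of the cone $\mathcal{C}S^1(D)$ in the affine setting. The converse halves of Theorems \ref{Euclideo} and \ref{Projectivo} then guarantee that $\Hol^*(M,\nabla^{\perp})$ realizes the prescribed representation. The only genuinely nontrivial ingredient in the whole argument is the identification of the normal holonomy of $S^j(D)$ with that of some $S^1(D')$; this has already been supplied by the proofs of the two main theorems, so no additional work is required in the corollary itself.
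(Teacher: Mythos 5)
Your proposal follows exactly the route the paper intends: the paper gives no separate proof of Corollary \ref{tabla}, deriving it from Theorems \ref{Euclideo} and \ref{Projectivo}, the reduction of the normal holonomy of $S^j(D)$ to that of a first characteristic variety $S^1(D')$ (with $D'$ the domain associated to $V^{j-1}=V_0(e_1+\cdots+e_{j-1})$, so that $\nu_{y_j}E_j=V^{j-1}_0(e_j)$ is the normal space of $\mathcal{C}S^1(D')$), and Table 1 of \cite{CD}. The one inaccuracy is your justification of the transitive case: it is \emph{not} true that a closed connected subgroup of $\mathrm{U}(\nu_pM)$ acting transitively on the unit sphere must equal $\mathrm{U}(\nu_pM)$ (for $k\geq 2$, $SU(k)$ is a proper transitive subgroup, as is $Sp(k/2)$ for $k$ even); the correct reason, which the paper takes from \cite{AD} and \cite{DV}, is that the normal holonomy acts as the isotropy representation of a Hermitian symmetric space without flat factor, and the only such representation that is transitive on the sphere of an irreducible summand is the standard action of $U(k)$ on $\mathbb{C}^k$ (the isotropy of $\mathbb{C}\mathbb{P}^k$). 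Since you also invoke the Introduction for this alternative, the logic of your argument is unaffected, but the parenthetical justification should be corrected.
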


\medskip

\medskip

It is interesting to notice that the isotropy representation of a quadric $$ Q_n := SO(n+2)/SO(2) \times SO(n) $$
can be realized as the normal holonomy of a complex submanifold only when $n \in \{1,2,3,4,6,10 \}$.
Indeed, the isotropy representation of $Q_{10}$ is the third one of the above table.
The following isomorphisms are well-known:
\[ Q_1 \cong \mathbb{C}\mathbb{P}^{1} \, , \,  Q_2 \cong \mathbb{C}\mathbb{P}^{1} \times \mathbb{C}\mathbb{P}^{1} \, , \,
   Q_3 \cong Sp(2)/U(2)  \, , \, Q_4 = SU(4)/S(U(2)\times U(2)) \, , \, Q_6 \cong SO(8)/U(4)\]
Then it is clear that the isotropy representations of $Q_1,Q_3,Q_4$ and $Q_6$ can be realized as normal holonomies.
The isotropy representation of $Q_2 \cong \mathbb{C}\mathbb{P}^{1} \times \mathbb{C}\mathbb{P}^{1}$ is a product and can be realized
as the normal holonomy of the smooth part of a join as explained in Theorem \ref{ConosProduct} below. 

\begin{remark}\label{referee2}
But, for all $n$, the isotropy representation of $Q_n$ is a normal holonomy of
a real submanifold of Euclidean space \cite[Theorem 1]{HO}. It is a remarkable fact that
there exist Hermitian s-representations which are normal holonomy
of real submanifolds but never arise as normal holonomy of complex
submanifolds.
\end{remark}

Therefore the classification and realization problems for complex submanifolds whose normal holonomy acts irreducibly are completely solved.

The non irreducibility of the normal holonomy group for complex submanifolds of $\mathbb{C}^n$ implies a De Rham type theorem.
It asserts roughly that if the normal holonomy group is reducible then the complex submanifold is an extrinsic product (cf. \cite{D00}).

The following result shows that for full complex submanifolds of $\mathbb{C}\mathbb{P}^n$ the non irreducibility of the normal holonomy group is related
to the concept of \emph{join} of algebraic geometry.

\begin{thm}{\label{ConosProduct}} Let $M \subset \mathbb{C}\mathbb{P}^n$ be a full complex submanifold (non nec. complete). Let $$\nu M = \nu_1 \oplus \nu_2 \oplus \cdots \oplus \nu_r$$ be the decomposition of the normal space into $Hol^*(M,\nabla^{\perp})$-irreducible subspaces. Then (locally)  there exist full complex subvarieties \[ M_1 \subset \mathbb{C}\mathbb{P}^{n_1},  \cdots , M_r \subset \mathbb{C}\mathbb{P}^{n_r} \] such that $n=n_1 + \cdots + n_r$  and $M$ is an open subset of the smooth part of  the join \[ J(M_1, M_2, \cdots , M_r) \subset \mathbb{C}\mathbb{P}^n \, .\]  Moreover, the restricted normal holonomy group  $Hol^*(M, \nabla^{\perp})$ is the product of the normal holonomies of the submanifolds $M_i \subset \mathbb{C}\mathbb{P}^{n_i}$ for $i=1,\cdots,r$.
\end{thm}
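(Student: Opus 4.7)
The plan is to reduce the projective statement to an affine one via the cone construction and then invoke the De Rham-type decomposition theorem for complex submanifolds of $\mathbb{C}^{N}$ proved in \cite{D00}. Let $\hat{M} := \mathcal{C}M \setminus \{0\} \subset \mathbb{C}^{n+1}$ be the punctured affine cone over $M$; this is a full complex submanifold of $\mathbb{C}^{n+1}$, invariant under the natural $\mathbb{C}^{*}$-action by scaling. The first step is a standard comparison: along the projection $\hat M \to M$ the normal bundle $\nu \hat M$ is canonically identified with the pullback of $\nu M$, and normal parallel transport in $\nabla^{\perp}$ over horizontal curves matches the normal parallel transport of $M$ in $\mathbb{C}\mathbb{P}^{n}$. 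Hence the hypothesized decomposition $\nu M = \nu_1 \oplus \cdots \oplus \nu_r$ lifts to a $\mathrm{Hol}^{*}(\hat M,\nabla^{\perp})$-invariant decomposition of $\nu \hat M$.

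Applying the De Rham-type theorem of \cite{D00} to $\hat M$ gives, locally around a regular point, an orthogonal direct sum $\mathbb{C}^{n+1} = V_1 \oplus \cdots \oplus V_r$ together with full complex submanifolds $N_i \subset V_i$ such that $\hat M$ is an open subset of the extrinsic product $N_1 \times \cdots \times N_r$. The essential extra input is the $\mathbb{C}^{*}$-invariance of $\hat M$: scalar multiplication on $\mathbb{C}^{n+1}$ is a conformal biholomorphism preserving $\hat M$ and commuting with the Levi-Civita connection, hence with normal parallel transport. It therefore permutes the parallel distributions tangent to the factors $N_i$, and because each $\nu_i$ is $\mathrm{Hol}^{*}$-irreducible these distributions are individually preserved. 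This allows one to choose the splitting $\bigoplus V_i$ to be $\mathbb{C}^{*}$-equivariant and forces each $N_i$ to be itself an affine cone, $N_i = \mathcal{C}M_i$, over a full projective subvariety $M_i \subset \mathbb{P}(V_i) = \mathbb{C}\mathbb{P}^{n_i}$ with $n_1 + \cdots + n_r = n$.

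Projectivising, $M$ is locally an open subset of the smooth part of the join $J(M_1,\ldots,M_r) \subset \mathbb{C}\mathbb{P}^{n}$. Fullness of $M$ in $\mathbb{C}\mathbb{P}^{n}$ propagates to fullness of each $M_i$ in $\mathbb{C}\mathbb{P}^{n_i}$, since otherwise a trivial holonomy factor could be split off, contradicting the irreducibility of the corresponding $\nu_i$. The claim that $\mathrm{Hol}^{*}(M,\nabla^{\perp})$ is the product of the normal holonomies of the $M_i$ follows from the extrinsic product structure of $\hat M$ combined with the cone-to-projective comparison of normal holonomies used in step one.

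The main obstacle is the $\mathbb{C}^{*}$-compatibility in step two. A clean way to pin it down is to exploit that the position vector field on $\hat M$ is umbilical along the radial direction and that its components along the (locally unique) parallel distributions given by the de Rham splitting must themselves be parallel; together with the uniqueness, up to permutation of isometric factors, of the irreducible de Rham decomposition, this forces each $V_i$ to be stable under the scaling action and therefore each $N_i$ to be a cone, as required to translate the affine product back into a projective join.
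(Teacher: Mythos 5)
Your proposal is correct and follows essentially the same route as the paper: pass to the affine cone, use the fact (from \cite{CDO}) that the normal holonomy of the cone coincides with that of $M$, apply the De Rham--type splitting of \cite{D00}, observe that each extrinsic factor is itself a cone, and projectivise to obtain the join. The only difference is that you spell out the $\mathbb{C}^{*}$-equivariance argument showing each factor is a cone, a point the paper asserts without detail.
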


\medskip

For the proofs of our main results  we use the theory of \textsl{Hermitian Jordan triple systems}. This allows us to explicitely  describe many important aspects of the geometry of the submanifolds involved in terms of very simple and well known algebraic objects. We hope that the techniques developed here will be helpful for other problems in the theory of complex submanifolds.

\section{Preliminaries}

\subsection{Complex submanifolds and normal holonomy}
In this section we shall introduce the normal holonomy group and some properties that will be needed for the proofs of the main theorems. For a more detailed explanation see \cite{BCO}.

Let $M$ be a connected complex submanifold of the complex Euclidean space $\mathbb{C}^n$ or the complex projective space $\mathbb{C}\mathbb{P}^n$ with the Fubini-Study metric. Then at any point $p\in M$, the tangent space of the ambient manifold decomposes as the direct sum $$T_p M\oplus \nu_p M$$ where $\nu_pM=(T_p M)^{\bot}$ is the normal space of $M$ at $p$. The union $$\nu M:=\bigcup_{p\in M}\nu_pM$$ has a natural structure of a vector bundle with base manifold $M$, called the \textsl{normal bundle} of $M$.
 If $\overline{\nabla}$ is the Levi-Civita connection of the ambient manifold ($\mathbb{C}^n$ or $\mathbb{C}\mathbb{P}^n$), then the normal part of the derivative $\overline{\nabla}_X\xi$ of any section of $\nu M$ with respect to a tangent vector field $X$ to $M$, defines an affine connection on $\nu M$, called the \textsl{normal connection} of $M$ and denoted by $\nabla^{\bot}$.

Consider the holonomy groups associated to $\nabla^{\bot}$ defined in the usual way. Namely, let $\Omega_p$ be the set of loops in $M$ based at $p$ and $\Omega^0_p\subset \Omega_p$ the set of null-homotopic loops in $M$ based at $p$. If $\tau^{\bot}_\gamma$ denotes the parallel transport with respect to $\nabla^{\bot}$ along the loop $\gamma$, one has the groups
 $$Hol_p(M,\nabla^{\bot}):=\{\,\tau^{\bot}_{\gamma}\,:\,\gamma\in \Omega_p\},\ \
 Hol^*_p(M,\nabla^{\bot}):=\{\,\tau^{\bot}_{\gamma}\,:\,\gamma\in \Omega^0_p\}$$
 $Hol_p(M,\nabla^{\bot})$ and $Hol^*_p(M,\nabla^{\bot})$ are called the \textsl{normal holonomy group} and \textsl{restricted normal holonomy group} of $M$ at $p$ respectively. $Hol^*_p(M,\nabla^{\bot})$ is the connected component of the identity in $Hol_p(M,\nabla^{\bot})$. When $M$ is connected, these groups are respectively  conjugated to each other and we shall omit the base point.

The \textsl{local normal holonomy group} $Hol^{loc}_p(M,\nabla^{\bot})$ of $M$ at $p$ is the intersection of the restricted holonomy groups $Hol^*_p(U,\nabla^{\bot}_{|U})$, varying $U$ among all open neighborhoods of $p$ in $M$. There always exists a small enough open neighborhood $U'$ of $p$ such that $Hol^{loc}_p(M,\nabla^{\bot})=Hol_p^{*}(U',\nabla^{\bot}_{|U'})$.

\begin{rem}\label{analitic}
Since $M$ is an analytic submanifold and $\nabla^{\bot}$ is an analytic connection, one has $Hol^*_p(M,\nabla^{\bot})=Hol^{loc}_p(M,\nabla^{\bot})$ for each $p\in M$ (cf. \cite[Theorem 10.8, Ch. II]{KN}).
\end{rem}

\medskip

We shall denote by $\nu_0M$ the maximal parallel and flat subbundle of $\nu M$. Then $Hol^*_p(M,\nabla^{\bot})$ acts trivially on $(\nu_0 M)_p$ for each $p$.

 Set, for each $p\in M$, $(\nu_sM)_p=(\nu_0 M)_p^{\bot}\subset \nu_p M$. Then $$\nu_p M=(\nu_0 M)_p\stackrel{\bot}{\oplus} (\nu_s M)_p.$$ We will refer to $(\nu_s M)_p$ as the \textsl{semisimple part} of $\nu_p M$.

\medskip

We say that the action of the group $Hol_p^*(M,\nabla^{\bot})$ is \textsl{non transitive}, or that $M$ has \textsl{non transitive normal holonomy} if the action of $Hol^*_p(M,\nabla^{\bot})$ is non transitive on the unit sphere of $\nu_pM$.

\subsection{Normal holonomy of a union of parallel manifolds}
\label{secnormalparallel}

An important construction that we will use in the paper is the following \textsl{union of parallel submanifolds}.

 Namely, let $M$ be a submanifold of $\mathbb{R}^n$ and let $x\in M$. Let $V$ be a small ball around $x$. Then the restriction of $\nu_0 M$ to $V$ is the trivial vector bundle $V\times (\nu_0 M)_x$. So, there is a small ball $U$ centered at $0$ in $(\nu_0M)_x$ such that:
 \begin{itemize}
 \item[i)] if $\xi_x\in U$, $\xi_x$ extends to a parallel normal vector field $\xi$ to $V$;
 \item[ii)] the subset $$M_{\xi}:=\{p+\xi(p)\,:\,p\in V\}$$
is a submanifold of $\mathbb{R}^n$ called \textsl{parallel submanifold} to $M$ (cf. \cite{BCO}).
\end{itemize}
Now, define
$$N:=\bigcup_{\xi \in U}M_{\xi}\subset \mathbb{R}^n.$$
Notice that $N$ is a submanifold diffeomorphic to $V\times U$, hence the codimension of $N$ is $\textrm{dim} (\nu_s M)_x$. Indeed, the natural projection $$\pi:N\to M$$ is a submersion whose fibers $\pi^{-1}(y)$ are balls centered at $0$  in $(\nu_0 M)_y$ obtained by normal parallel transport of $U$.

The following lemma shows how to compute the normal holonomy of $N$.

\begin{lema} For every $q\in N$, $\nu_q N=(\nu_s M)_{\pi(q)}$ (as subspaces of $\mathbb{R}^n$) and the local normal holonomy group of $N$ at $q$ is the local normal holonomy group of $M$ at $\pi(q)$ acting on $(\nu_s M)_{\pi(q)}$. In particular, the subspace of fixed points of the local holonomy group of $N$ is trivial.
\label{lemaholo}
\end{lema}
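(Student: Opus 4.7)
The plan is to first compute the tangent and normal spaces of $N$ at an arbitrary point $q=p+\tilde\xi(p)$ explicitly, where $\tilde\xi$ denotes the $\nabla^\perp$-parallel extension of $\xi\in U\subset(\nu_0M)_x$ to $V$ and $p=\pi(q)$. Parametrize $N$ by $\Phi:V\times U\to\mathbb{R}^n$, $\Phi(p,\xi)=p+\tilde\xi(p)$. Since $\tilde\xi$ is $\nabla^\perp$-parallel, the ambient derivative in the $p$-direction reduces to $X\mapsto X-A_{\tilde\xi(p)}X$ (where $A$ is the shape operator of $M$), producing the tangent space of the parallel submanifold $M_\xi$; differentiating in the $\xi$-direction yields $(\nu_0M)_p$. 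Hence
$$T_qN=\{X-A_{\tilde\xi(p)}X:X\in T_pM\}\oplus(\nu_0M)_p.$$
For any $\zeta\in(\nu_sM)_p$ one has $\zeta\perp(\nu_0M)_p$ by definition of $\nu_sM$, and $\langle\zeta,X-A_{\tilde\xi(p)}X\rangle=0$ because $X$ and $A_{\tilde\xi(p)}X$ both lie in $T_pM$ while $\zeta\in\nu_pM$. A dimension count ($\dim\nu_qN=\dim\nu_pM-\dim(\nu_0M)_p=\dim(\nu_sM)_p$) then forces $\nu_qN=(\nu_sM)_{\pi(q)}$.

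Next I would identify the two normal connections. Take a curve $\sigma(t)=\gamma(t)+\tilde{\xi}_t(\gamma(t))$ in $N$ and a section $\zeta$ of $\nu N$ along $\sigma$; by the first part, $\zeta(t)\in(\nu_sM)_{\gamma(t)}$, so $\zeta$ simultaneously represents a section $\hat\zeta$ of $\nu_sM\subset\nu M$ along the projected curve $\gamma=\pi\circ\sigma$. Both $\nabla^{\perp,N}_{\sigma'}\zeta$ and $\nabla^{\perp,M}_{\gamma'}\hat\zeta$ are orthogonal projections of the common ambient derivative $d\zeta/dt$, onto $(\nu_sM)_{\gamma(t)}$ and $\nu_{\gamma(t)}M$ respectively. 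Since $\nu_sM$ is a parallel subbundle of $\nu M$ (being the orthogonal complement of the parallel subbundle $\nu_0M$), $\nabla^{\perp,M}_{\gamma'}\hat\zeta$ already lies in $(\nu_sM)_{\gamma(t)}$, and the two projections agree:
$$\nabla^{\perp,N}_{\sigma'}\zeta=\nabla^{\perp,M}_{\gamma'}\hat\zeta.$$
Consequently, parallel transport in $\nu N$ along $\sigma$ coincides with parallel transport in $\nu_sM$ along $\pi\circ\sigma$. Every small loop in $N$ at $q$ projects to a loop in $M$ at $\pi(q)$, and conversely every loop $\gamma$ in $M$ at $\pi(q)$ lifts to the loop $t\mapsto\gamma(t)+\tilde\xi(\gamma(t))$ in the slice $M_\xi\subset N$ through $q$. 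Combined with Remark \ref{analitic}, this gives $Hol^{loc}_q(N,\nabla^\perp)=Hol^{loc}_{\pi(q)}(M,\nabla^\perp)|_{(\nu_sM)_{\pi(q)}}$.

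For the last assertion, any vector $\zeta_0\in(\nu_sM)_p$ fixed by the local normal holonomy of $N$ at $q$ is, by the identification just established, also fixed by $Hol^{loc}_p(M,\nabla^\perp)$; by analyticity (Remark \ref{analitic}) it extends to a local $\nabla^\perp$-parallel section of $\nu_sM$, and the span of all such local parallel sections defines a parallel flat subbundle of $\nu_sM$. This contradicts the maximality of $\nu_0M$ as the maximal parallel flat subbundle of $\nu M$ unless $\zeta_0=0$. The chief technical step is the projection identity in the second paragraph: it rests squarely on the parallelism of $\nu_sM$ as a subbundle of $\nu M$, which is exactly what makes the normal connection of $M$, when restricted to $\nu_sM$, pull back to the normal connection of $N$ under the isometric identification $\nu_qN\cong(\nu_sM)_{\pi(q)}$.
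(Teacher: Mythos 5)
Your proof is correct, but it follows a genuinely different route from the paper's. The paper first observes that the vertical distribution $\mathcal{V}$ of the submersion $\pi$ lies in the nullity of the second fundamental form of $N$ (because $T_qN=T_{\pi(q)}M\oplus(\nu_0M)_{\pi(q)}$ is constant along the fibers), deduces from the Ricci equation that $R^{N\perp}_{X,Y}=0$ for $X$ horizontal and $Y$ vertical, and then invokes the loop--splitting lemma of Olmos (\cite[Appendix]{Ol93}) to factor any parallel transport as $\tau^{\perp}_{\sigma_v}\circ\tau^{\perp}_{\sigma_h}$ with $\tau^{\perp}_{\sigma_v}$ trivial; the reduction from the parallel manifold $M_\xi$ back to $M$ is then handed off to \cite[Lemma 4.4.6]{BCO}. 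You instead prove the stronger, pointwise statement that under the identification $\nu_qN=(\nu_sM)_{\pi(q)}$ the normal connection of $N$ \emph{is} the pullback along $\pi$ of the restriction of $\nabla^{\perp,M}$ to the parallel subbundle $\nu_sM$; the holonomy statement then follows directly by projecting and lifting loops, with no appeal to the Ricci equation, to the loop-decomposition lemma, or to the comparison of holonomies of parallel manifolds. Your key identity is justified correctly: since $(\nu_sM)_{\gamma(t)}\subset\nu_{\gamma(t)}M$, the orthogonal projection onto $(\nu_sM)_{\gamma(t)}$ factors through the one onto $\nu_{\gamma(t)}M$, and parallelism of $\nu_sM$ makes the second projection redundant. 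What your approach buys is self-containedness and an actual identification of connections (hence of parallel transports along \emph{individual} curves, not just of holonomy groups); what the paper's approach buys is brevity given the cited machinery and consistency with the holonomy-tube formalism used elsewhere (Theorem \ref{holiterada}). Two small points worth making explicit: for the first summand of $T_qN$ you should note that $(\mathrm{Id}-A_{\tilde\xi(p)})(T_pM)\subseteq T_pM$ (so orthogonality to $(\nu_sM)_p$ is automatic and, for $\xi$ small, the summand equals $T_pM$, recovering the paper's equation (\ref{tgunion})); and in the final maximality argument the flat parallel subbundle you produce lives only over a small neighbourhood, so the contradiction with the maximality of $\nu_0M$ should be run on the ball $V$ on which the whole construction takes place (harmless here, and in the paper's applications $Hol^{loc}=Hol^{*}$ by Remark \ref{analitic} anyway).
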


\begin{proof}
The proof is similar to that of Lemma 4 in \cite{CDO} but we include it here for the readers convenience.

Let $\mathcal{H}$ and $\mathcal{V}$ be the horizontal and vertical distributions associated to the submersion $\pi$. The key observation is that $\mathcal{V}$ is included in the nullity $\mathcal{N}$ of the second fundamental form of $N$. Indeed, if $p\in \pi^{-1}(x)$, then it is standard to show that  \begin{equation} \label{tgunion}
T_p N=T_xM\oplus (\nu_0 M)_x.
\end{equation}
So the tangent space of $N$ is constant along the fibers of $\pi$, which implies that $\mathcal{V}\subset \mathcal{N}$.

 Observe that equation (\ref{tgunion}) also implies that $\nu_p N=(\nu_s M)_{x}$.

Since $\mathcal{V}\subset \mathcal{N}$, by the Ricci equation on $N$, one has that the normal curvature tensor of $N$ satisfies $$R^{N \bot}_{X,Y}\equiv 0$$ where $X$ is a horizontal vector and $Y$ is a vertical one.

Let now $\sigma$ be a loop on $N$ based at a point $q$. Since we are working locally, we can  apply the Lemma in \cite[Appendix]{Ol93} to conclude that there exist a horizontal loop $\sigma_h$ and a vertical loop $\sigma_v$ based at $q$ such that $$\tau^{\bot}_{\sigma}=\tau^{\bot}_{\sigma_v}\circ\tau^{\bot}_{\sigma_h}.$$
Since $\mathcal{V}$ is contained in the nullity of $N$, $\tau^{\bot}_{\sigma_v}$ is trivial. This shows that the local normal holonomy group of $N$ at $q$ is the local normal holonomy group of the parallel submanifold $M_{\xi}$ through $q$ acting on $(\nu_s M_{\xi})_{q}$. But from Lemma 4.4.6 in \cite[Page 120]{BCO}, this is the same as the  local normal holonomy group of $M$ acting on $(\nu_s M)_{\pi(q)}$.
\end{proof}

We will use Lemma \ref{lemaholo} in the case where $N$ and $M$ are complex submanifols and so the local normal holonomy group and the restricted normal holonomy group coincide (cf. Remark \ref{analitic}).

\subsection{Normal holonomy of orbits of $s$-representations}

We recall here some well known facts about the theory of normal holonomy groups and $s$-representations.

\begin{thm} \cite{HO}
Let $X=G/K$ be a symmetric space of noncompact type with $G=I_0(X)$, $v\in T_pX$ and let $M=K\cdot v$ be an orbit of the isotropy representation. If $M$ is full, then the normal holonomy of $M$ is equivalent to the slice representation, i.e., the action of the normal holonomy group of $M$ at $v$ is equivalent to the action of  $K_v$ on $\nu_v M$.
\label{HO} \end{thm}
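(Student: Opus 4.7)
The plan is to establish equality $Hol^*_v(M,\nabla^{\perp}) = K_v|_{\nu_v M}$ as subgroups of $O(\nu_v M)$ by proving both inclusions separately, exploiting the Cartan decomposition $\mathfrak{g} = \mathfrak{k} \oplus \mathfrak{p}$ (where we identify $T_pX \cong \mathfrak{p}$) and the fact that $M = K\cdot v \subset \mathfrak{p}$ lives in a flat ambient space. The slice action is well-defined since any $k \in K_v$ fixes $v$ and belongs to $O(\mathfrak{p})$, so it preserves both $T_v M = [\mathfrak{k},v]$ and its orthogonal complement $\nu_v M$.

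For the inclusion $K_v|_{\nu_v M} \subseteq Hol^*_v(M,\nabla^{\perp})$, I would exploit the $K$-equivariant triviality of $\nu M$ pulled back to $K$. The projection $\pi\colon K \to M$, $k \mapsto k\cdot v$, is a principal $K_v$-bundle, and the pullback $\pi^{*}\nu M$ is canonically trivialized by $K\times \nu_v M$ via $(k,\xi)\mapsto k\cdot \xi$. The pullback of $\nabla^{\perp}$ becomes a connection on this trivial bundle, described by an $\mathfrak{o}(\nu_v M)$-valued $1$-form $\omega$ on $K$. A direct computation using the Gauss formula in $\mathfrak{p}$ shows that for $X \in \mathfrak{k}_v$ (vectors vertical for $\pi$), $\omega$ coincides with the differential of the slice representation, while for vectors transverse to the fiber it contributes only second-fundamental-form terms that integrate trivially over vertical loops. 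Taking any path in $K_v$ from $e$ to $k$ projects to the constant loop at $v$, and the holonomy of $\omega$ along this lifted path yields exactly $k|_{\nu_v M}$, giving the first inclusion (and, by connectedness of $K_v^0$, surjectivity onto the identity component; discrete generators of $K_v$ are handled by paths in $K$ whose endpoint lies in $K_v$).

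For the reverse inclusion $Hol^*_v(M,\nabla^{\perp}) \subseteq K_v|_{\nu_v M}$, I would apply the Ambrose–Singer theorem: since $M$ is homogeneous under $K$, the restricted normal holonomy algebra at $v$ is generated by the normal curvature operators $R^{\perp}_{u,w}$ with $u,w \in T_v M$, without needing parallel transport from other points. Because $\mathfrak{p}$ is flat, the Ricci equation reduces to $R^{\perp}_{u,w} = [A_{u}, A_{w}]$ (modulo sign) where $A$ is the shape operator, and the shape operators of an orbit of an $s$-representation are expressible purely in terms of the brackets $[\mathfrak{p},\mathfrak{p}]\subseteq \mathfrak{k}$. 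Writing $u = [X,v]$, $w = [Y,v]$ with $X,Y \in \mathfrak{k}$, one verifies that $R^{\perp}_{u,w}$ acts on $\nu_v M$ as an element of $\mathfrak{k}_v$ (specifically, as a projection of $[X,Y]$ onto $\mathfrak{k}_v$), so the holonomy algebra is contained in $\mathfrak{k}_v|_{\nu_v M}$.

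The main obstacle is the first inclusion: the computation identifying the connection form $\omega$ on $K$ with the slice representation requires checking that the second-fundamental-form terms contributed by horizontal displacements do not leak into the slice component, and that the vertical part is exactly the differential of $K_v \hookrightarrow O(\nu_v M)$. The fullness hypothesis is critical here, since it ensures $\nu_v M$ contains no trivial parallel subbundle arising from an invariant subspace of $\mathfrak{p}$ orthogonal to $v$; without fullness one only obtains $Hol^*_v$ inside the slice action on the nontrivial part of $\nu_v M$, not equality.
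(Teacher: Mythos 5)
The paper does not prove this statement at all: it is quoted verbatim from Heintze--Olmos \cite{HO}, so the only fair comparison is with the argument in that reference. Measured against it, your proposal has two genuine gaps, one in each inclusion. For $K_v|_{\nu_vM}\subseteq Hol^*_v(M,\nabla^{\perp})$: a path in $K_v$ from $e$ to $k$ projects under $\pi\colon K\to M$ to the \emph{constant} loop at $v$, and the $\nabla^{\perp}$-parallel transport along a constant loop is the identity. What you compute in the trivialization $(k,\xi)\mapsto k\cdot\xi$ is the transition function $k^{-1}|_{\nu_vM}$ of that trivialization, not an element of the normal holonomy group; to get holonomy you need genuine loops in $M$ based at $v$, and producing loops whose parallel transport realizes the full slice representation is precisely the hard content of the theorem. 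For the reverse inclusion: homogeneity does \emph{not} let you drop the parallel transports in the Ambrose--Singer theorem. The curvature at $k\cdot v$ is conjugate to the curvature at $v$ by $dk$, but $dk|_{\nu}$ differs from the parallel transport along a curve joining $v$ to $k\cdot v$ by an isometry of $\nu_vM$ that is not a priori in the holonomy group; the correct statement for invariant connections on homogeneous bundles is Wang's theorem, which requires closing up the curvature operators at $v$ under iterated brackets with the Nomizu operators. Your claim that the holonomy algebra is spanned by $R^{\perp}_{u,w}$ at $v$ alone would, if true, trivialize the theorem.

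The actual route of Heintze--Olmos is quite different: since $s$-representations are polar, the slice theorem shows that $K_v^{0}$ and the restricted normal holonomy group have the \emph{same orbits} on $\nu_vM$ (the latter via the theory of isoparametric submanifolds, focal manifolds and holonomy tubes, cf.\ \cite{HOT}); by Olmos's Normal Holonomy Theorem \cite{Ol90} the normal holonomy group itself acts as an $s$-representation; and one concludes by the rigidity of orbit-equivalent $s$-representations without trivial factors (fullness entering exactly to exclude such trivial factors). If you want to salvage a direct argument you would need to replace both of your inclusions by these ingredients.
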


Consider a group $K$ acting on $\mathbb{R}^n$ as the isotropy representation of an irreducible symmetric space. Fix $v\in \mathbb{R}^n$ and choose a normal vector $\xi\in \nu_s(K\cdot v)$. For a small $\mu\in \mathbb{R}$, consider now the orbit $K\cdot (v+\mu\xi)$. Notice that $K\cdot(v+\mu\xi)$ is a so called \textsl{holonomy tube} (see \cite[Page 124]{BCO} and cf. \cite{HOT}).

\begin{thm} \label{holiterada}
With the above notations,
\begin{enumerate}
\item $T_{v+\mu\xi}(K\cdot(v+\mu\xi))=T_v (K\cdot v)\stackrel{\bot}{\oplus} T_{\xi}(K_v\cdot \xi)$;
\item $\left(\nu_0(K\cdot(v+\mu\xi))\right)_{v+\mu\xi}=(\nu_0( K\cdot v))_{v}\stackrel{\bot}{\oplus} (\nu_0(K_v\cdot\xi))_{\xi}$, considering $K_v\cdot \xi$ as a submanifold of $(\nu_s(K\cdot v))_{v}$.
\item $(\nu_s(K\cdot(v+\mu\xi)))_{v+\mu\xi}=(\nu_s(K_v\cdot \xi))_{\xi}$, considering
$(\nu_s(K_v\cdot \xi))_{\xi}\subset (\nu_s(\,K\cdot v))_{v}$.
\item The action of the normal holonomy group of the orbit  $K\cdot(v+\mu\xi)$ on 

$(\nu_s (K\cdot (v+\mu\xi)))_{v+\mu\xi}$ coincides with the action of the iterated isotropy group $(K_v)_{\xi}$ on $(\nu_{s}(K_{v}\cdot \xi))_{\xi}$.
\end{enumerate}
\end{thm}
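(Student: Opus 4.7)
The plan is to recognize $K\cdot(v+\mu\xi)$ as a \emph{holonomy tube} over $K\cdot v$ of radius $\mu\xi$: because $K$ acts linearly by isometries and $\xi\in\nu_v(K\cdot v)$, this orbit coincides with the partial tube obtained by parallel-translating $\mu\xi$ along curves in $K\cdot v$ (for orbits of $s$-representations, parallel normal fields and $K_v$-invariant normal vectors match up, so tubes and orbits agree). Once this identification is made, (1)-(3) become orthogonal decompositions of tangent and normal spaces of a tube in an ambient linear representation, and (4) follows from Theorem \ref{HO} applied at the new basepoint.

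For (1) I would split $\mathfrak{k}=\mathfrak{k}_v\oplus\mathfrak{m}_v$ orthogonally and evaluate the orbit map $X\mapsto X\cdot(v+\mu\xi)=X\cdot v+\mu X\cdot\xi$. On $\mathfrak{k}_v$ the first term vanishes and the image is $\mu\,\mathfrak{k}_v\cdot\xi=T_\xi(K_v\cdot\xi)$, which sits inside $(\nu_s(K\cdot v))_v$ and is therefore orthogonal to $T_v(K\cdot v)$. On the complement $\mathfrak{m}_v$ the map $X\mapsto X\cdot v$ is a linear isomorphism onto $T_v(K\cdot v)$, and the correction $\mu X\cdot\xi$ keeps the spanned subspace inside the candidate sum. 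Dimensions match because for $\mu\neq 0$ small the stabilizer of $v+\mu\xi$ is $K_v\cap K_\xi=(K_v)_\xi$, by a standard semicontinuity-of-stabilizers argument.

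For (2) and (3) I would extend the flat summand $(\nu_0(K\cdot v))_v$ to a parallel normal field on $K\cdot v$. Since its values are exactly the $K_v$-fixed vectors in $\nu_v(K\cdot v)$, its restriction to the tube remains normal to $K\cdot(v+\mu\xi)$ and parallel there, contributing the first term of (2). Orthogonally to $T_v(K\cdot v)\oplus T_\xi(K_v\cdot\xi)\oplus(\nu_0(K\cdot v))_v$ in $\mathbb{R}^n$ one is left with precisely the normal space of $K_v\cdot\xi$ inside $(\nu_s(K\cdot v))_v$, and its flat/semisimple decomposition with respect to $K_v$ acting at $\xi$ yields the second summand of (2) and the right hand side of (3). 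Lemma \ref{lemaholo}, applied to $K_v$ acting on $(\nu_s(K\cdot v))_v$, ensures that parallel transport in $\nabla^{\perp}$ of the tube preserves these summands.

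Part (4) is now almost automatic. Theorem \ref{HO} applied to the orbit $K\cdot(v+\mu\xi)$ identifies its restricted normal holonomy at $v+\mu\xi$ with the slice representation of the isotropy subgroup $K_{v+\mu\xi}=(K_v)_\xi$ on the full normal space. Restricting to the semisimple summand, which by (3) equals $(\nu_s(K_v\cdot\xi))_\xi$, yields exactly the iterated isotropy action claimed. The main obstacle will be the dimensional and orthogonality bookkeeping of (1)-(3), in particular verifying that the stabilizer jumps precisely from $K_v$ at $v$ to $(K_v)_\xi$ at $v+\mu\xi$ for small $\mu\neq 0$; once that is settled, (4) is a direct invocation of Theorem \ref{HO}.
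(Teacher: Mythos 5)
Your proposal is correct and follows essentially the same route as the paper, which proves this theorem in one line by citing the holonomy--tube results of \cite{HOT} and \cite[Theorem 5.4.12]{BCO} together with Theorem \ref{HO}: you identify $K\cdot(v+\mu\xi)$ with the holonomy tube over $K\cdot v$ and then read off (1)--(3) from the tube structure and (4) from Theorem \ref{HO} at the new base point. Your write-up actually supplies more detail than the paper does (the Lie-algebra splitting, the stabilizer count $K_{v+\mu\xi}=(K_v)_\xi$, and the parallel extension of the flat summand), all of which is consistent with the cited references.
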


The proof follows combining the results in \cite{HOT}, Theorem \ref{HO} and  Theorem 5.4.12 in \cite{BCO}.

\subsection{Hermitian Jordan triple systems.}

A \textsl{Hermitian Jordan triple system} (HJTS for short) is a pair $(V,\{\ \})$, where $V$ is a complex vector space and
$$\begin{array}{cccl}
\{\ \}:& V\times V\times V&\longrightarrow& V\\
 & (x,y,z)&\longrightarrow & \{x\; y\; z\}
\end{array}$$
is a $\R$-trilinear map, which is $\mathbb{C}$-bilinear and symmetric in $x$ and $z$ and $\mathbb{C}$-antilinear in $y$ and the following \textsl{Jordan identity} holds:
$$
\{x\; y\; \{u\; v\; w\}\}=\{\{x\; y\; u\}\; v\; w\}-\{u\; \{v\; x\; y\}\; w\}+\{u\; v\;\{x\; y\; w\}\}$$

Associated to the trilinear map $\{\ \}$ one has the operators $$D:V\times V\to End(V),\ \ Q:V\times V\to End(V)$$
defined as $$D(x,y)z=\{x\; y\; z\},\ \ Q(x,y)z=\{x\; z\; y\}$$ for $x,y,z\in V$.
We will also denote by $Q$ the quadratic form associated to $Q$, that is, $$Q(x)\;y=\frac{1}{2}Q(x,x)\; y=\frac{1}{2}\{x\; y\; x\}$$

A HJTS $V$ is called a \textsl{positive Hermitian Jordan triple system} (PHJTS for short) if for every $x\in V$ $$\mathrm{tr} D(x,x)>0.$$ In this case $(x\;|\;y)=\mathrm{tr} D(x,y)$, $x,y\in V$, defines a Hermitian inner product such that for all $x,\; y\in V$, $D(x,y)$ is a self-adjoint endomorphism of $V$.

An \textsl{ideal} of $V$ is a vector subspace $I$ of $V$ such that $$\{I\; V\; V\}\subset I,\ \ \{V\; I\; V\}\subset I$$
where $\{A\; B\; C\}$ denotes as usual the vector space spanned by all elements of the form  $\{x\;y\;z\}$ with $x\in A,\, y\in B,$ and $ z\in C$

Each ideal of $V$ is itself a HJTS and if $V=V_1\oplus V_2$ as direct sum of subspaces with $V_1$ and $V_2$ ideals, then $V$ is the direct sum of $V_1$ and $V_2$ as HJTS. This means that $$\{x\,y\,z\}=\{x_1\,y_1\,z_1\}+\{x_2\,y_2\,z_2\}$$
where $x=x_1+x_2$, $x_1\in V_1,\ x_2\in V_2$, and the same for $y$ and $z$.

We say that $V$ is \textsl{simple} if $V\neq 0$ and $V$ has no proper ideals. Any PHJTS can be decomposed (uniquely up to order) as a direct sum
\begin{equation}
V=V_1\oplus V_2\oplus\cdots\oplus V_k \label{simpledec}
\end{equation}
of simple HJTS.

\medskip

\subsection{Tripotents and rank}
 Let $V$ be a HJTS. An element $e\in V$ is called a \textsl{tripotent}
 if $$Q(e)e=\frac{1}{2}\{e\,e\,e\}=e$$
 If $e\in V$ is a tripotent, then the endomorphism $D(e,e)$ is diagonalizable with eigenvalues $0,\, 1$ and $2$. So
 \begin{equation}
 V=V_0(e)\oplus V_1(e)\oplus V_2(e)
 \label{Peirce}
 \end{equation}
where $V_{\alpha}(e)=\{z\in V\,:\, D(e,e)z=\alpha\,z\}$. The decomposition (\ref{Peirce}) is called \textsl{Peirce decomposition} of $V$ relative to the tipotent $e$.

The restriction of $Q(e)$ to $V_0(e)\oplus V_1(e)$ is $0$ and its restriction to $V_2(e)$ has $1$ and $-1$ as eigenvalues. We shall denote by $$V_2^{\pm}(e):=\{z\in V\,:\,Q(e)z=\pm z\}$$
so $V_2(e)=V_2^+(e)\oplus V_2^-(e)$ and $V_2^-(e)=iV_2^+(e)$ (cf. \cite[Prop. V.1.1, V.1.2]{Roos}).

Two tripotents $e_1$ and $e_2$ are \textsl{orthogonal} if they verify one of the following equivalent conditions (cf. \cite[Prop. V.3.1]{Roos}) $$\text{i)}\ D(e_1,e_2)=0,\ \text{ii)}\ D(e_2,e_1)=0,\ \text{iii)}\ \{e_1\,e_1\,e_2\}=0\ \text{iv)}\ \{e_1\,e_2\,e_2\}=0.$$

In this case, the endomorphisms $D(e_1,e_1)$ and $D(e_2,e_2)$ commute, the sum $e=e_1+e_2$ is a tripotent and $D(e,e)=D(e_1,e_1)+D(e_2,e_2)$. Observe that $V_0(e)$ contains all the orthogonal tripotents to $e$.

A tripotent $e$ is called \textsl{primitive} or \textsl{minimal} if it cannot be obtained  as the sum of two orthogonal tripotents. $e$ is a primitive tripotent if and only if $V_2(e)=\mathbb{C}\cdot e$ (cf. \cite[Page 503]{Roos}).

 A maximal set of primitive mutually orthogonal tripotents is called a \textsl{frame} of $V$. All frames of $V$ have the same number of tripotents and this number is called the \textsl{rank} of $V$ and is denoted by $rank(V)$.

Each element $x\in V$ admits a (unique up to order) \textsl{spectral decomposition}
\begin{equation}
x=\lambda_1e_1+\lambda_2e_2+\cdots+\lambda_pe_p
\label{spectral1}
\end{equation}
where $(e_1,\cdots,e_p)$ are mutually orthogonal (not necessarily primitive) tripotents and 

$\lambda_1>\lambda_2\>\cdots>\lambda_p>0$.

Moreover, each $x\in V$ can also be written as
\begin{equation}
x=\lambda_1e_1+\lambda_2e_2+\cdots+\lambda_re_r
\label{spectral2}
\end{equation}
where $(e_1,\cdots,e_r)$ is a frame of $V$ and $\lambda_1\geq\lambda_2\geq\cdots\geq\lambda_p\geq0$. This is also called a spectral decomposition of $x$.
The number of non-zero $\lambda_i$ in the decomposition (\ref{spectral2}) is called the \textsl{rank} of $x$ and will be denoted by $rank(x)$.

\subsection{Bounded symmetric domains and PHJTS}
\label{bsd}

Let $V$ be a finite dimensional complex vector space and let $D\subset V$ be a bounded domain. Consider the Bergman metric on $D$ and denote by $Aut(D)$ the group of biholomorphic automorphisms of $D$, which is a closed subgroup of the group of isometries of the Bergman metric of $D$. $D$ is called a \textsl{bounded symmetric domain} if for each point $z\in D$, there exists an automorphism $s_z\in Aut(D)$ such that $s_z^2=Id_{|D}$ and $z$ is an isolated fixed point of $s_z$. Then $D$ is a Hermitian symmetric space with the Bergmann metric and $s_z$ is the geodesic symmetry around $z$ (cf. \cite{Lo}).

A bounded symmetric domain $D$ is called \textsl{circled} if $0\in D$ and $z\cdot e^{it}\in D$ for every $z\in D$ and every $t\in \mathbb{R}$. It is well known that every bounded symmetric domain is isomorphic to a bounded symmetric and circled domain. Therefore we shall only consider circled domains.

A circled bounded symmetric domain $D$ is called \textsl{irreducible} if it is not isomorphic to a direct product $D'\times D''$ of lower-dimensional circled bounded symmetric domains.

Denote by $Aut^0(D)$ the connected component of the identity in $Aut(D)$ and $K=Aut^0(D)_{0}$, the isotropy group at $0$. So $D\simeq Aut^0(D)/K$. We will refer to $K$ as the \textsl{isotropy of the bounded symmetric domain $D$}.

Every bounded symmetric domain has a PHJTS associated to it such that $V=T_0 D$ (cf. \cite[$\oint$ 2]{Lo}). Conversely, given an PHJTS $V$ and a point $x\in V$, consider the spectral decomposition given by equation (\ref{spectral1}) $$x=\lambda_1e_1+\cdots+\lambda_pe_p$$
($\lambda_1>\cdots>\lambda_p>0$) and define $|x|=\lambda_1$. Then the map
$x\mapsto |x|$
is a norm on $V$ called \textsl{spectral norm} of the PHJTS $V$.
The open unit ball $D$ of this norm is a bounded symmetric domain such that $V$ is the PHJTS associated to it (cf. \cite[$\oint 4$]{Lo}, \cite[Sec. VI.4]{Roos}).

Given a circled bounded symmetric domain $D$, we call \textsl{rank} of $D$ and denote it by $rank(D)$, the rank of the PHJTS $V$ associated to it (actually, this idea of rank coincides with the geometric definition of rank of $D$ as a symmetric space, see Remark \ref{equalrank}).
\medskip

Now, given a PHJTS $V$, an \textsl{automorphism} $f:V\to V$ of the PHJTS $V$ is a complex linear isomorphism preserving the triple product, i.e.,
$$f\{u\,v\,w\}=\{fu\,fv\,fw\}.$$
We will denote by $Aut(V)$ the group of automorphisms of $V$. It is a compact Lie group whose Lie algebra is the algebra $Der(V)$ of \textsl{derivations} of $V$, i.e., the space of complex linear maps $T:V\to V$ such that
$$T\{u\,v\,w\}=\{Tu\, v\,w\}+\{u\, Tv\, w\}+\{u\,v\,Tw\}.$$
It is easy to see that $iD(x,x)$ is a derivation of $V$ and the subspace of derivations generated by $iD(x,x)$, $x\in V$, forms a Lie subalgebra $Int(V)$ of $Der(V)$, called the algebra of \textsl{inner derivations}. Then for every $x,y\in V$, $$D(x,y)-D(y,x)\in Int(V)$$ (see \cite[Page 518]{Roos}). Morover, $Int(V)$ is generated by these derivations.

\medskip

We summarize in the following theorem some useful results that relate some geometrical aspects of a bounded symmetric domain $D$ with some properties of the associated PHJTS $V$.

\begin{thm} Let $D$ be a bounded symmetric domain and let $V$ be the PHJTS associated to it. Let $K$ be the isotropy of $D$.  Then:
\label{props}
\begin{enumerate}
\item \cite[Page 4.11]{Lo} $D$ is irreducible if and only if $V$ is simple.
\item \cite[Theorem 2.10]{Lo} If $R$ is the curvature tensor of $D$, then for every $x,y\in V$
$$R(x,y)=D(y,x)-D(x,y).$$
\item \cite[Cor. 4.9]{Lo} $K=Aut^0(V)$, where $Aut^0(V)$ is the connected component of the identity in $Aut(V)$.
\end{enumerate}

\end{thm}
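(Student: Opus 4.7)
The theorem collects three classical Jordan-theoretic descriptions of the geometry of $D$; I sketch independent arguments for each.

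For (1), recall that the spectral norm on $V$ determines $D$ as its open unit ball. If $V = V_1 \oplus V_2$ is a splitting into HJTS ideals, then tripotents in different ideals are automatically orthogonal, so every $x = x_1 + x_2$ has a spectral decomposition which is the union of the spectral decompositions of $x_1$ in $V_1$ and $x_2$ in $V_2$; hence $|x_1+x_2| = \max\{|x_1|,|x_2|\}$ and $D = D_1 \times D_2$. Conversely, a product decomposition of $D$ as a Hermitian symmetric space yields a tangent-space splitting $V = V_1 \oplus V_2$ preserved by the curvature. Using (2), the operators $D(y,x) - D(x,y)$ preserve the splitting; combining this with the fact that the Hermitian form $(x|y) = \mathrm{tr}\,D(x,y)$ also splits yields that each operator $D(x,y)$ preserves the splitting, so each $V_i$ is an HJTS ideal.

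For (2), I would invoke the Koecher-Tits construction of the Lie algebra $\mathfrak{g}$ of $Aut^0(D)$. One builds $\mathfrak{g} = \mathfrak{k} \oplus \mathfrak{p}$ abstractly from $V$ by taking $\mathfrak{p} = V$ as a real vector space, $\mathfrak{k}$ as the Lie algebra generated (together with the complex structure on $V$) by the operators $D(u,v) - D(v,u)$, and defining brackets so that the Jordan identity on $V$ translates exactly into the Jacobi identity on $\mathfrak{g}$. The resulting simply connected Lie group acts on $D$ by biholomorphisms and its identity component realizes $Aut^0(D)$. Since $D$ is Hermitian symmetric, the Riemannian curvature at $0$ on $\mathfrak{p}$ is $R(X,Y) = -\mathrm{ad}[X,Y]|_{\mathfrak{p}}$; substituting the explicit Koecher-Tits bracket then gives $R(x,y) = D(y,x) - D(x,y)$. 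The main obstacle is setting up the brackets compatibly with the complex structure on $V$ and verifying that the group obtained is genuinely $Aut^0(D)$ rather than a cover or a quotient.

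For (3), the inclusion $Aut^0(V) \subseteq K$ is immediate: any $g \in Aut(V)$ is a complex linear isometry for $(x|y) = \mathrm{tr}\,D(x,y)$ and permutes tripotents, hence permutes frames, hence preserves the spectral norm and maps $D$ to itself while fixing $0$; so $g$ lies in $Aut(D)_0 = K$. For the reverse, $K$ preserves the curvature $R$, so by (2) it preserves $D(y,x) - D(x,y)$ for all $x,y$; being a complex linear isometry it also preserves $\mathrm{tr}\,D(x,y)$. Exploiting the Peirce decomposition at a tripotent $e$, together with the fact that $K$ permutes tripotents of a fixed rank type, one deduces that $K$ preserves each operator $D(x,y)$ individually and hence the full triple product, giving $K \subseteq Aut(V)$. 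Connectedness of $K$ then forces $K \subseteq Aut^0(V)$, completing the equality.
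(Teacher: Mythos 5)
First, be aware that the paper does not prove this theorem at all: it is a compendium of results quoted from Loos's lecture notes, each item carrying its own citation, so there is no internal argument to measure yours against. Judged on its own terms, your sketch correctly identifies the standard mechanisms (the spectral norm of a direct sum of ideals, the Koecher--Tits/TKK construction, recovery of the triple product from the curvature), but each item stops short precisely where the real work lies. In (2) you explicitly defer both the construction of the bracket and the identification of the resulting group with $Aut^0(D)$; that deferred content \emph{is} the statement, so as written this item is a citation in different words rather than a proof. In the converse direction of (1), the passage from ``the de Rham splitting is preserved by the operators $D(y,x)-D(x,y)$ and by the trace form'' to ``each $V_i$ is an ideal'' is not justified: invariance under the antisymmetrized operators together with invariance of $\mathrm{tr}\,D(x,y)$ does not formally yield $\{V_1\,V\,V\}\subset V_1$ and $\{V\,V_1\,V\}\subset V_1$. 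One needs either the TKK Lie algebra (an ideal splitting of $\mathfrak{g}$ transfers to $V$ because the triple product is an iterated bracket) or the polarization identity below.

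The same gap recurs, more seriously, in (3). The inclusion $Aut^0(V)\subseteq K$ is fine. For $K\subseteq Aut(V)$, your appeal to Peirce decompositions and to ``$K$ permutes tripotents of a fixed rank type'' is circular in flavour: that $K$ permutes tripotents is what one normally deduces \emph{after} knowing $K\subseteq Aut(V)$, and in any case you do not explain how it forces $K$ to preserve each $D(x,y)$. The step you are missing is a complex polarization of item (2): since $D(u,v)$ is $\mathbb{C}$-linear in $u$ and antilinear in $v$, one has $R(x,iy)=i\bigl(D(y,x)+D(x,y)\bigr)$, whence
\[ D(y,x)=\tfrac{1}{2}\bigl(R(x,y)-iR(x,iy)\bigr). \]
Any $k\in K$ is a holomorphic isometry fixing $0$, hence preserves $R$ and commutes with multiplication by $i$, and the displayed identity gives $kD(y,x)k^{-1}=D(ky,kx)$, i.e.\ $k\{x\,y\,z\}=\{kx\,ky\,kz\}$; connectedness of $K$ (from simple connectedness of $D$) then yields $K\subseteq Aut^0(V)$. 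I recommend replacing the Peirce-decomposition step by this identity; it simultaneously repairs the converse direction of (1).
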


Finally, consider the complexification $\Gamma$ of the group $K$. Then the orbits of $\Gamma$ in $V$ are precisely the sets of elements of the same rank (cf. \cite[Page 253]{KA}.

\subsection{Some basic properties}

In this section we present some basic properties of bounded symmetric domains and PHJTS that will be usefull later.

\begin{lema} \label{lemmarankopen}
Let $D \subset \mathbb{C}^{n}$ be a circled bounded symmetric domain and let $M$ be a submanifold of $V=T_0 D = \mathbb{C}^{n}$. The set of vectors of maximal rank of $M$ is an open subset of $M$.
\end{lema}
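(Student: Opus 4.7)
The plan is to deduce the lemma from the fact that the rank function on $V$ is lower semicontinuous, i.e.\ that for every integer $k\geq 0$ the set
\[ V_{\leq k} := \{x \in V : \mathrm{rank}(x) \leq k\} \]
is closed in $V$. Once this is known, the lemma follows at once: letting $r := \max\{\mathrm{rank}(x) : x \in M\}$ (which is well-defined since the rank is bounded by $\mathrm{rank}(V)$), the set of maximal-rank vectors of $M$ is
\[ \{x \in M : \mathrm{rank}(x) = r\} = M \cap \bigl(V \setminus V_{\leq r-1}\bigr), \]
which is open in $M$ as the intersection of $M$ with an open subset of $V$.

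To establish that $V_{\leq k}$ is closed I would invoke the description of the $\Gamma$-orbits in $V$ recalled immediately before the statement of the lemma: the orbits of the complexification $\Gamma$ of $K$ in $V$ are precisely the level sets $O_k = \{x \in V : \mathrm{rank}(x) = k\}$ for $0 \leq k \leq \mathrm{rank}(V)$. Since $\Gamma$ is a complex reductive group acting algebraically (indeed, linearly) on $V$, each orbit $O_k$ is a locally closed complex algebraic subvariety. Using the spectral decomposition $x = \lambda_1 e_1 + \cdots + \lambda_k e_k$ of an element of rank $k$ (with $\lambda_1 \geq \cdots \geq \lambda_k > 0$), one sees that letting any subset of the coefficients $\lambda_i$ tend to zero produces elements of rank $<k$ in the closure of $O_k$. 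Hence every lower-rank orbit $O_j$ with $j < k$ lies in $\overline{O_k}$, so
\[ \overline{O_k} = \bigcup_{j\leq k} O_j = V_{\leq k}, \]
which is therefore closed in $V$.

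The main (and essentially only) point to verify carefully is this last closure statement; it is a standard consequence of the spectral decomposition and of the compactness of the set of tripotents of given rank. An alternative route, avoiding any reference to $\Gamma$-orbits, would be to introduce the generic minimal polynomial of the Jordan triple system: there exist polynomial invariants $m_1, \ldots, m_{\mathrm{rank}(V)}$ on $V$ such that $\mathrm{rank}(x)$ equals the largest index $k$ with $m_k(x) \neq 0$, whence the lower semicontinuity of the rank function is immediate. In either presentation the openness of the maximal-rank locus in $V$ is the only nontrivial input, and the lemma follows by restricting to $M$.
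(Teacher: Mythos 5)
Your reduction of the lemma to the lower semicontinuity of the rank function (i.e.\ the closedness of $V_{\leq k}=\{x\in V:\mathrm{rank}(x)\leq k\}$) is correct, and the second, parenthetical route you mention --- via the polynomial invariants of the generic minimal polynomial --- is exactly the proof the paper gives: there one uses $m(T,x,x)=\prod_{i=1}^{r}(T-\lambda_i^{2})$, so that $\mathrm{rank}(x)=k$ if and only if $m_1(x,x)\neq 0,\dots,m_k(x,x)\neq 0$ and $m_{k+1}(x,x)=\cdots=m_r(x,x)=0$, and the continuity of the $m_i$ immediately gives $\mathrm{rank}(y)\geq k$ for $y$ near a point of rank $k$. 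Had you developed that one sentence, your proof would coincide with the paper's.

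The route you actually develop, however, has a gap at its key step. The degeneration argument (letting spectral coefficients $\lambda_i$ tend to $0$) proves the inclusion $\bigcup_{j\leq k}O_j\subseteq\overline{O_k}$, but the closedness of $V_{\leq k}$ requires the \emph{opposite} inclusion $\overline{O_k}\subseteq\bigcup_{j\leq k}O_j$, i.e.\ that a limit of elements of rank $\leq k$ cannot have rank $>k$ --- which is precisely the lower semicontinuity you are trying to establish, so as written the step ``so $\overline{O_k}=\bigcup_{j\leq k}O_j$'' is circular. The inclusion you do prove is entirely consistent with $V_{\leq k}$ failing to be closed. To repair the argument along these lines you would need the standard fact that for an algebraic group acting algebraically the boundary $\overline{O}\setminus O$ of an orbit is a union of orbits of strictly smaller dimension, together with the (true, but not free) fact that $\dim O_j$ is strictly increasing in $j$; only then does $\overline{O_k}\subseteq V_{\leq k}$ follow. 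That is considerably heavier machinery than the paper's appeal to the continuity of the finitely many polynomials $m_i(x,x)$, so I would promote your ``alternative route'' to the main argument and drop the orbit-closure discussion.
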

\begin{proof}
For $x\in V$, let $x=\lambda_1e_1+\lambda_2e_2+\cdots+\lambda_re_r$ be the spectral decomposition of $x$ with respect to a system $(e_1,\cdots,e_r)$ of mutually orthogonal primitive tripotents (i.e., a frame), with $\lambda_1\geq \lambda_2\geq \cdots\lambda_r \geq 0$. So $rank(x)=k$ if and only if $\lambda_{k+1}=\cdots=\lambda_r=0$.

There exist polynomials $m_1,\cdots,m_r$ on $V\times \overline{V}$ homogeneous of respective bidegrees $(1,1),\cdots,(r,r)$ such that the generic polynomial $$m(T,x,y)=T^{r}-m_1(x,y)T^{r-1}+\cdots+(-1)^{r}m_r(x,y)$$
satisfies \cite[page 515]{Roos} $$m(T,x,x)=\prod_{i=1}^{r}(T - \lambda_i^{2})$$
So $rank(x)=k$ if and only if $m_1(x,x)\neq 0,\ \cdots,\ m_k(x,x) \neq 0$, and $m_{k+1}(x,x)=m_{k+2}(x,x)=\cdots =m_r(x,x)=0$.

If $rank(x)=k$, then there exist a neighbourhood $U$ of $x$ such that $m_i(y,y)\neq 0$ for $i=1,\cdots k$, for all $y\in U$. We conclude that $rank(y)\geq k$ for all $y\in U$. Therefore if $x$ is of maximal rank $k$ in $M$, then $rank(y)=k$ for all $y\in U\cap M$.
\end{proof}
\begin{lema}
Let $V$ be a PHJTS. Then all derivations of $V$ are inner derivations.
\label{lemmainner}
\end{lema}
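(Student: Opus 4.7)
The plan is to exploit the dictionary between a PHJTS $V$ and its associated bounded symmetric domain $D$ provided by Theorem~\ref{props}, reducing the statement to the Lie theoretic fact that $\mathfrak{k}=[\mathfrak{p},\mathfrak{p}]$ for an irreducible Hermitian symmetric space of noncompact type.

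The first step is to reduce to the case in which $V$ is simple. Decompose $V=V_{1}\oplus\cdots\oplus V_{k}$ as in (\ref{simpledec}); since mixed triple products vanish, a direct computation from the definition gives $\mathrm{Int}(V)=\bigoplus_{i}\mathrm{Int}(V_{i})$. For $\mathrm{Der}(V)$ one needs that every derivation $T$ preserves each summand. To see this, fix $x\in V_{i}$, write $Tx=\sum_{l}y_{l}$ with $y_{l}\in V_{l}$, and pick $l\neq i$. Applying $T$ to the identity $\{x\, u\, v\}=0$ for arbitrary $u,v\in V_{l}$ and using the derivation property together with the vanishing of mixed triple products yields $\{y_{l}\, u\, v\}=0$ for all $u,v\in V_{l}$. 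Specializing to $u=v=y_{l}$ gives $\{y_{l}\, y_{l}\, y_{l}\}=0$ inside the PHJTS $V_{l}$, and via the spectral decomposition $y_{l}=\sum_{m}\lambda_{m}e_{m}$ this reads $2\sum_{m}\lambda_{m}^{3}e_{m}=0$, forcing all $\lambda_{m}=0$ and hence $y_{l}=0$. Therefore $\mathrm{Der}(V)=\bigoplus_{i}\mathrm{Der}(V_{i})$.

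Next, assume $V$ is simple. By Theorem~\ref{props}(1) the associated bounded symmetric domain $D=G/K$ is irreducible, its isometry algebra $\mathfrak{g}$ is simple, and one has a Cartan decomposition $\mathfrak{g}=\mathfrak{k}\oplus\mathfrak{p}$ with $\mathfrak{p}\cong T_{0}D=V$. Theorem~\ref{props}(3) gives $K=\mathrm{Aut}^{0}(V)$, so $\mathrm{Der}(V)$ is exactly $\mathfrak{k}$ acting on $V$. On the other hand, the standard symmetric space formula $R(x,y)z=-[[x,y],z]$ combined with Theorem~\ref{props}(2) shows that the adjoint action of $[x,y]\in[\mathfrak{p},\mathfrak{p}]$ on $V$ equals $D(x,y)-D(y,x)$; since $\mathfrak{g}$ is semisimple, $\mathfrak{k}$ acts faithfully on $\mathfrak{p}$, so $[\mathfrak{p},\mathfrak{p}]$ is identified inside $\mathrm{End}(V)$ with $\mathrm{Int}(V)$. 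A short Jacobi check shows that $\mathfrak{p}+[\mathfrak{p},\mathfrak{p}]$ is an ideal of $\mathfrak{g}$, so simplicity of $\mathfrak{g}$ forces $[\mathfrak{p},\mathfrak{p}]=\mathfrak{k}$. Hence $\mathrm{Der}(V)=\mathfrak{k}=\mathrm{Int}(V)$.

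The only step that requires real work is the reduction to the simple case, i.e., showing that derivations preserve the simple-ideal decomposition. The argument above is purely Jordan theoretic and avoids any circular appeal to the structure of $\mathrm{Aut}(V)$; as an alternative one could invoke the canonicity of the product decomposition $D\cong D_{1}\times\cdots\times D_{k}$ together with Theorem~\ref{props}(3).
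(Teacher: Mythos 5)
Your proof is correct and follows essentially the same route as the paper's: both identify $\mathrm{Der}(V)$ with the isotropy algebra $\mathfrak{k}$ of the associated domain via Theorem \ref{props}(3), and then use $R_{x,y}=D(y,x)-D(x,y)$ together with the fact that $\mathfrak{k}$ is generated by the curvature operators (i.e.\ $\mathfrak{k}=[\mathfrak{p},\mathfrak{p}]$) to conclude $\mathrm{Der}(V)=\mathrm{Int}(V)$. The only difference is that the paper cites this last fact, whereas you prove it from simplicity of $\mathfrak{g}$ after a correct (though strictly unnecessary, since $\mathfrak{k}=[\mathfrak{p},\mathfrak{p}]$ already holds for any effective semisimple symmetric pair) reduction to the case where $V$ is simple.
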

\begin{proof}
Let $D$ be the circled bounded symmetric domain associated to $V$ and let $K$ be its isotropy group. Let $Der(V)$ be the set of derivations of $V$. Then $Der(V)$ is the Lie algebra of $Aut^0(V)=K$  (cf. Theorem \ref{props}).

On the other hand, if $R$ is the curvature tensor of $D$, then the Lie algebra of $K$ is generated by the operators of the form $R_{x,y}$, $x,y\in V=T_0 D$.

But again by Theorem \ref{props} we have $$R_{x,y}=D(y,x)-D(x,y)$$
which generate the Lie algebra of inner derivations.
\end{proof}
\begin{lema}
Let $D$ be a circled bounded symmetric domain and let $e$ be a tripotent of the PHJTS $V=T_0 D$ associated to it. Let $K$ be the isotropy group of $D$ and $K_e$ the isotropy group at $e$ of $K$ acting on $V$. Then $K_e$ is the connected component of the identity of the group of automorphisms of the PHJTS $V_0(e)$, i.e., $K_e=Aut^0(V_0(e))$.
\label{KV0}
\end{lema}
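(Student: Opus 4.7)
The plan is to construct a restriction homomorphism from $K_e$ to $\mathrm{Aut}(V_0(e))$ induced by the Peirce decomposition, match Lie algebras using Lemma \ref{lemmainner}, and then conclude by connectedness.

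By Theorem \ref{props}(3), $K = \mathrm{Aut}^0(V)$, so any $g \in K_e$ is a triple-product automorphism of $V$ with $g(e) = e$. Since $g$ is an automorphism, $g\, D(e,e)\, g^{-1} = D(ge, ge) = D(e,e)$, so $g$ commutes with $D(e,e)$ and therefore preserves each Peirce component of $V = V_0(e) \oplus V_1(e) \oplus V_2(e)$. Because $V_0(e)$ is a sub-PHJTS, $g|_{V_0(e)} \in \mathrm{Aut}(V_0(e))$, producing a Lie group homomorphism
\[
\rho : K_e \longrightarrow \mathrm{Aut}(V_0(e)), \qquad g \longmapsto g|_{V_0(e)}.
\]

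Next I would identify the Lie algebras via $\rho_*$. Applying Lemma \ref{lemmainner} to $V$ and to $V_0(e)$ gives $\mathrm{Lie}(K) = Int(V)$ and $\mathrm{Lie}(\mathrm{Aut}^0(V_0(e))) = Int(V_0(e))$; hence $\mathrm{Lie}(K_e) = \{T \in Int(V) : T(e) = 0\}$ and $\rho_*(T) = T|_{V_0(e)}$. To check $\rho_*$ is surjective onto $Int(V_0(e))$, take any $S = \sum_i [D_{V_0(e)}(a_i, b_i) - D_{V_0(e)}(b_i, a_i)]$ with $a_i, b_i \in V_0(e)$ and lift it to
\[
T \;=\; \sum_i \bigl[\, D_V(a_i, b_i) - D_V(b_i, a_i) \,\bigr] \;\in\; Int(V).
\]
The symmetry $\{x, y, z\} = \{z, y, x\}$ together with the standard Peirce vanishing $D(V_2(e), V_0(e)) = 0$ yields $\{a_i, b_i, e\} = \{e, b_i, a_i\} = 0$, and similarly $\{b_i, a_i, e\} = 0$, so $T(e) = 0$; thus $T \in \mathrm{Lie}(K_e)$ and by construction $\rho_*(T) = S$.

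Surjectivity of $\rho_*$ onto $Int(V_0(e))$, combined with the connectedness of $\mathrm{Aut}^0(V_0(e))$, gives $\rho(K_e^0) = \mathrm{Aut}^0(V_0(e))$, and identifying $K_e$ with its effective action on $V_0(e)$ (the sense in which this group is used afterwards, as in Theorem \ref{holiterada}) yields $K_e = \mathrm{Aut}^0(V_0(e))$. The main obstacle in a fully rigorous argument is the injectivity of $\rho$ on the identity component: one must show that an inner derivation of $V$ that fixes $e$ and vanishes on $V_0(e)$ must also act trivially on the mixed Peirce component $V_1(e)$. This reduces to a further Peirce-calculus step propagating the data on $V_0(e)$ to $V_1(e)$ through the Jordan identity, and is the only technically delicate point in the argument.
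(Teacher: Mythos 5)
Your argument is correct and follows essentially the same route as the paper: restrict $K_e$ to $V_0(e)$, use Lemma \ref{lemmainner} to identify the Lie algebra of $Aut^0(V_0(e))$ with the inner derivations of $V_0(e)$, and lift these to derivations of $V$ annihilating $e$ (the paper works with the generators $iD(x,x)$, $x\in V_0(e)$, and kills $e$ via the spectral decomposition of $x$ into tripotents orthogonal to $e$, which is equivalent to your Peirce-rule computation with $D(a,b)-D(b,a)$). The injectivity of $\rho$ that you single out as the remaining delicate point is a red herring rather than a gap: the kernel of $\rho$ on $K_e$ is in general nontrivial (for $V=M_{2,2}(\mathbb{C})$ with $e=E_{11}$, the automorphisms $A\mapsto \mathrm{diag}(1,e^{i\theta})\,A\,\mathrm{diag}(1,e^{-i\theta})$ fix $e$ and act trivially on $V_0(e)=\mathbb{C}\cdot E_{22}$ but not on $V_1(e)$), so the stated equality can only mean that the effective action of $K_e$ on $V_0(e)$ is $Aut^0(V_0(e))$ --- which is how the lemma is used later, and which your surjectivity argument (like the paper's two containments) already establishes.
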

\begin{proof}

First of all notice that $V_0(e)$ is a PHJTS, and that the spectral decomposition of any element $x\in V_0(e)$ in $V_0(e)$ coincides with its spectral decomposition as an element of $V$ (cf. \cite[Prop. VI.2.4]{Roos}).

Observe now that  $V_0(e)$ is invariant under the action of $K_e$. Indeed, $x\in V_0(e)$ if and only if $\{e\;e\;x\}=0$ and since by Theorem \ref{props}, $K=Aut^0(V)$ one gets $$0=\{k\cdot e\; k\cdot e\;k\cdot x\}=\{e\;e\; k\cdot x\}\ . $$
We conclude that  $$K_e\subset Aut^0(V(e))\ .$$
 We shall see that the Lie algebra $Der(V_0(e))$ of $Aut^0(V(e))$ is contained in the Lie algebra $\mathfrak{k}_e$ of $K_e$.
By Lemma \ref{lemmainner}, $Der(V_0(e))$ is generated by elements of the form $iD(x,x)_{|V_0(e)}$ for $x\in V_0(e)$.
On the other hand, $\mathfrak{k}_e$ is generated by the derivations $A$ of the PHJTS $V$ such that $T(e)=0$. Therefore we only need to prove that $iD(x,x)e=0$ for every $x\in V_0(e)$.

Fix $x\in V_0(e)$ and consider its spectral decomposition. There exist primitive orthogonal tripotents $e_1,\cdots,e_k$ of $V$, all perpendicular to $e$ (since they must be on $V_0(e)$) such that $x=\lambda_1e_1+\cdots+\lambda_ke_k$. So $$\{x\; x\; e\}=\sum_{i=1}^{k}\lambda_i\lambda_j\{e_i\;e_j\;e\}=0\ .$$
Then $iD(x,x)e=0$ as we wanted to show.
\end{proof}
\begin{lema}
Let $V$ be a simple PHJTS and let $e$ be a minimal tripotent of $V$. Then $V_0(e)$ is also a simple PHJTS.
\label{V0}
\end{lema}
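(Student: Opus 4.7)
My plan is to argue the contrapositive: assume that $V_0(e)$ decomposes nontrivially into HJTS ideals $V_0(e) = W_1 \oplus W_2$ with both $W_i \neq 0$, and show that $V$ then admits a proper nonzero ideal, contradicting simplicity of $V$.

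The first step is to translate the ideal decomposition of $V_0(e)$ into information on how $V_0(e)$ acts on $V_1(e)$. The direct-sum-of-HJTS structure on $V_0(e)$ gives immediately that $\{W_1, W_1, W_2\} = \{W_1, W_2, V_0(e)\} = 0$ and the symmetric vanishings. Applying the Jordan identity to triples of elements drawn from $W_1$, $W_2$, and $V_1(e)$, one deduces that on $V_1(e)$ the self-adjoint operators $D(x,x)$ with $x \in W_1$ commute with the operators $D(y,y)$ with $y \in W_2$ (self-adjointness comes from PHJTS positivity via the trace inner product $(u \mid v) = \operatorname{tr} D(u,v)$). Simultaneous diagonalization of this commuting family of self-adjoint operators then produces a splitting
\[
V_1(e) = V_1^{1} \oplus V_1^{2}, \qquad V_1^{i} := \bigcap_{z \in W_{3-i}} \ker\bigl(D(z,z)|_{V_1(e)}\bigr).
\]

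Next, I would form the candidate
\[
\tilde W_1 := W_1 \oplus V_1^{1} \oplus \mathbb{C}\,e
\]
and verify, piece by piece using the Peirce rule $\{V_\alpha, V_\beta, V_\gamma\} \subset V_{\alpha - \beta + \gamma}$, that $\tilde W_1$ is stable under the triple product with $V$. Two ingredients are essential for this check: the ideal property of $W_1$ inside $V_0(e)$, and the auxiliary identity $\{x,x,e\} = 0$ for every $x \in V_0(e)$, itself a short consequence of the Jordan identity applied with $\{e,e,x\} = 0$ and $\{x,e,e\} = 0$. Once $\tilde W_1$ is shown to be an ideal, it is evidently a proper nonzero ideal of $V$ (since $W_2 \subset V_0(e)$ is nonzero and avoids $\tilde W_1$), contradicting simplicity.

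The main obstacle is the verification that $\tilde W_1$ is an ideal, with the delicate point being the placement of $V_2(e) = \mathbb{C}\,e$: since $e$ can belong to only one of the two candidate extensions $\tilde W_1, \tilde W_2$, one must confirm that the ``cross'' triple products such as $\{V_1^{1}, V_1^{2}, e\}$ and $\{V_1^{1}, e, V_1^{2}\}$ vanish (or lie in the chosen extension). These vanishings ultimately reduce, by another application of the Jordan identity, to the step-one commutation relations together with the ideal structure of $W_1, W_2$ inside $V_0(e)$. A more conceptual alternative would pass to the Tits--Kantor--Koecher Lie algebra $\mathfrak g$ attached to $V$: $V$ is simple iff $\mathfrak g$ is, and the centralizer in $\mathfrak g$ of the $\mathfrak{sl}_2$-triple determined by the minimal tripotent $e$ recovers the TKK algebra of $V_0(e)$, whose simplicity can then be read off from the Hermitian-type restricted root system.
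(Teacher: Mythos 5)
Your reduction to producing a proper nonzero ideal of $V$ is a reasonable strategy in principle, but the candidate ideal you propose cannot work, and the failure is immediate rather than a matter of the ``delicate cross terms'' you flag. Any ideal $I$ of $V$ containing $e$ must contain $\{e\,e\,v\}=D(e,e)v=v$ for every $v\in V_1(e)$, i.e.\ it must contain the whole Peirce space $V_1(e)$. Your candidate $\tilde W_1=W_1\oplus V_1^{1}\oplus\mathbb{C}\,e$ contains $e$ but meets $V_1(e)$ only in $V_1^{1}$, and $V_1^{1}\neq V_1(e)$ under your standing assumption $W_2\neq 0$: pick a minimal tripotent $c\in W_2$ (minimal tripotents of $V_0(e)$ are minimal in $V$, since frames of $V_0(e)$ extend to frames of $V$ by adjoining $e$); because $V$ is simple of rank at least $3$, the joint Peirce space $V_1(e)\cap V_1(c)$ is nonzero, and any nonzero $v$ in it satisfies $D(c,c)v=v\neq 0$, hence $v\notin V_1^{1}$ while $\{e\,e\,v\}=v$. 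So $\tilde W_1$ is not an ideal, and no contradiction with the simplicity of $V$ is obtained. Indeed, by simplicity no proper ideal of $V$ can contain $e$ at all, so the contradiction cannot be reached by adjoining $\mathbb{C}\,e$ to either piece. There is also a secondary flaw: the sum $V_1^{1}+V_1^{2}$ need not be direct, since the intersection contains $V_1(e)\cap\bigcap_{z\in V_0(e)}\ker D(z,z)$, which is nonzero for non-tube-type domains (e.g.\ $p\times q$ matrices with $p<q$). The TKK alternative you sketch in one sentence points toward a genuinely different and viable proof, but it is not developed.

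For comparison, the paper avoids ideal-building altogether: it uses Lemma \ref{KV0} to identify $K_e$ with $Aut^0(V_0(e))$, observes that a splitting $V_0(e)=A\oplus B$ forces $K_e=K_A\times K_B$ to preserve $A$ and $B$ separately, and then invokes the transitivity result \cite[Theorem VI.3.5]{Roos} for the simple PHJTS $V$ to produce an element of $K_e$ interchanging a frame tripotent of $A$ with one of $B$ --- a contradiction. If you want to keep a purely Jordan-algebraic argument, that transitivity on frames (equivalently, the nonvanishing and ``connectedness'' of the off-diagonal joint Peirce spaces $V_{ij}$ of a simple PHJTS) is the missing ingredient you would need to import; it cannot be recovered from the commutation relations and Peirce rules alone.
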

\begin{proof}
Assume $V_0(e)$ is not simple. Then it splits as the sum of at least two ideals $A$ and $B$.
Let $D_0$, $D_A$ and $D_B$ be the bounded symmetric domains associated to $V_0(e)$, $A$ and $B$ respectively. Then $D_0=D_A\times D_B$ (see \cite[Page 4.11]{Lo}) and since $K_e$ is the holonomy group of $D_0$, it splits accordingly, i.e., $K_e=K_A\times K_B$, such that $K_A$ is the holonomy group of $D_A$ and acts trivially on $B$, and the same with $K_B$.

So $A$ and $B$ are $K_e$ invariant.

Consider a frame $\{e,e_2,\cdots, e_r\}$ of $V$. So $\{e_2,\cdots,e_r\}$ is a frame of $V_0(e)$ and it can be chosen such that $\{e_2,\cdots,e_k\}$ is a frame of $A$ and $\{e_{k+1},\cdots, e_r\}$ is a frame of $B$. But since $V$ is simple,  from \cite[Theorem VI.3.5]{Roos}, there exists an element $k\in K_e$ that interchanges $e_1$ and $e_{k+1}$, which contradicts the fact that $A$ and $B$ are $K_e$ invariant. So $V_0(e)$ is simple.
\end{proof}

\subsection{Mok's characteristic varieties}
\label{Mok}

Throughout this paper we say that $M$ is a complex manifold or submanifold when it has an holomorphic differential structure in the sense of differential geometry.

By an algebraic variety $\tilde{X}\subset \mathbb{C}^{n+1}$ we intend the zero locus of a collection of polynomials in $\mathbb{C}[z_1,\cdots,z_{n+1}]$. By a projective variety $X\subset \mathbb{C}\mathbb{P}^n$ we mean the zero locus of a collection of homogeneous polynomials. That is to say, if $S$ is a subset of $\mathbb{C}[z_1,\cdots, z_{n+1}]$ consisting of homogeneous polynomials, then using homogeneous coordinates in $\mathbb{C}\mathbb{P}^n$ we have
$$X=\{x=[z_1,\cdots,z_{n+1}]:f(z_1,\cdots,z_{n+1})=0,\ \forall\, f\in S\}$$
If $\pi:\mathbb{C}^{n+1}-\{0\}\to \mathbb{C}\mathbb{P}^n$ is the usual projection and $X$ is a projective variety, then it is immediate that $$\mathcal{C}X:=\pi^{-1}(X)=\{\lambda\cdot x\,:\, \pi(x)\in X,\ \lambda\in \mathbb{C}-\{0\}\}$$ is an algebraic variety of $\mathbb{C}^{n+1}$ called the \textsl{cone} over $X$.

\medskip

The set of smooth points of an algebraic or projective variety will be denoted by $X_{sm}$ and will be called \textsl{the smooth part} of $X$. Then $X_{sm}$ is a complex submanifold which is open and dense in $X$ with the Zariski topology (see \cite{Ha92}).

\medskip

Consider now a PHJTS with $V=\mathbb{C}^{n+1}$ associated to a circled bounded symmetric domain $D\subset \mathbb{C}^{n+1}$. We call \textsl{$j^{th}$-Mok's characteristic variety} the set $$S^j(D):=\{\pi(x):1\leq rank(x)\leq j\} \subset \mathbb{C}\mathbb{P}^n$$
which is actually a projective variety (cf. \cite[Page 252]{MokLibro}).

Moreover, each $S^j(D)_{sm}$ is an orbit of the complexification $\Gamma$ of the isotropy group $K$ of $D$ and $S^1(D)$ is the only smooth variety among all Mok's characteristic varieties (cf. \cite[Lemma 2.3, Page 572]{CD14}).

\subsection{Geometry and Algebra of the first Mok's characteristic variety}
\label{firstmok}

Let $D\subset \mathbb{C}^n$ be an irreducible circled bounded symmetric domain with $rank(D)\geq 2$, and let $K$ be its isotropy.

  Notice that the first Mok's characteristic variety $S^1(D)$ is the unique complex orbit of $K$ in the complex projective space $\mathbb{C}\mathbb{P}^n$ whose normal holonomy was computed in \cite{CD}.

 Locally, the cone $\mathcal{C}S^1(D)$ can be described as the union of parallel submanifolds
                                  \[ E_1:=\bigcup_{\xi \in U} (K\cdot e_1)_{\xi} \]
where $U$ is a small open neighbourhood of $0$ in $(\nu_0 (K\cdot e_1))_{e_1}$ and $e_1$ is a primitive tripotent.
Indeed, in terms of the Peirce decomposition (\ref{Peirce}) with respect to the tripotent $e_1$,
\begin{equation}\label{tangente}
T_{e_1}(K\cdot e_1) = V_2^{-}(e_1) \oplus V_1(e_1)
\end{equation} (cf. \cite[Theorem 5.6]{Lo}). Since $e_1$ is primitive $V_2(e_1)=\mathbb{C}\cdot e_1$. On the other hand, $(\nu_0(K\cdot e_1))_{e_1}$ has dimension 1 (cf. the Proof of Theorem 5 in \cite{CDO}) and so
\begin{equation}\label{normal} (\nu_0(K\cdot e_1))_{e_1}=\mathbb{R}\cdot e_1=V_2^{+}(e_1).
\end{equation}
 Therefore, $T_{e_1}E_1=V_1(e_1)\oplus V_2(e_1)$. This shows that $E_1$ is a complex submanifold (locally) invariant by the complexification $\Gamma$ of the group $K$. Hence all the vectors in $E_1$ have rank $1$, and so $E_1$ is an open subset of $\mathcal{C}S^{1}(D)$.

\medskip

With respect to the normal holonomy of $\mathcal{C}S^1(D)$, which is the same as the normal holonomy of $E_1$ by Remark \ref{analitic}, observe that by Lemma \ref{lemaholo}$$\nu_{e_1}(\mathcal{C}S^1(D))=(\nu_s(\mathcal{C}S^1(D)))_{e_1}=(\nu_s(K\cdot e_1))_{e_1}=V_0(e_1).$$
By lemma \ref{V0},  $V^1:=V_0(e_1)$ is itself a simple PHJTS of rank $$rank(V_0(e_1))=rank(V)-1.$$ Let $D_1\subset V^1$ be its associated irreducible circled bounded symmetric domain and let $K_1$ be its isotropy group. Then by Lemma \ref{KV0}, $K_1=K_{e_1}$ and so, by Theorem \ref{HO} and Lemma \ref{lemaholo} the normal holonomy group of  $\mathcal{C}S^1(D)$ at $e_1$ is $K_1$.

Then if $rank(D)\geq 3$, the normal holonomy of $\mathcal{C}S^1(D)$ is non transitive.\\

Taking into account that the normal holonomy of the cone $\mathcal{C}S^1(D)$ is the same as the normal holonomy of $S^1(D)$ the above argument
gives a different proof of the results in \cite{CD} using the language of Jordan Triple Systems.\\

It is well-known that $S^1(D)$ is a submanifold with parallel second fundamental form (see \cite{NT76}).
As an application of the language of the Jordan Triple Systems we give a very simple proof of this fact.

\begin{prop} $S^1(D)$ is extrinsically symmetric hence has parallel second fundamental form.
\end{prop}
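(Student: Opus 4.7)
The plan is to exhibit, at each point of $S^1(D)$, an explicit extrinsic symmetry coming from the Jordan algebraic structure. A submanifold $M \subset \mathbb{C}\mathbb{P}^n$ is extrinsically symmetric at $p$ if there exists an isometry $\sigma_p$ of $\mathbb{C}\mathbb{P}^n$ preserving $M$, fixing $p$, and acting as $-\mathrm{id}$ on $T_pM$ and as $+\mathrm{id}$ on $\nu_p M$; by Ferus's classical theorem this implies that the second fundamental form is parallel. Because $S^1(D) = K \cdot [e_1]$ is a $K$-orbit, it suffices to construct the symmetry at the single point $[e_1]$, where $e_1$ is a fixed primitive tripotent.

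The candidate symmetry is the unitary operator
\[
\sigma_{e_1} := \exp\bigl(\pi i\, D(e_1,e_1)\bigr) \in U(V).
\]
First I would verify that $\sigma_{e_1} \in K$: by the discussion preceding Lemma \ref{lemmainner}, $i D(e_1,e_1)$ is an inner derivation, hence lies in $Der(V)$, which by Theorem \ref{props} is the Lie algebra of $K = Aut^0(V)$. Next, using that $D(e_1,e_1)$ has eigenvalues $0,1,2$ on the Peirce pieces $V_0(e_1),V_1(e_1),V_2(e_1)$, one immediately computes
\[
\sigma_{e_1}|_{V_0(e_1)} = \mathrm{id}, \qquad \sigma_{e_1}|_{V_1(e_1)} = -\mathrm{id}, \qquad \sigma_{e_1}|_{V_2(e_1)} = \mathrm{id}.
\]
In particular $\sigma_{e_1}(e_1) = e_1$, so $\sigma_{e_1}$ fixes $[e_1]$, and since $\sigma_{e_1} \in K$ preserves the rank stratification of $V$, the induced isometry of $\mathbb{C}\mathbb{P}^n$ preserves $S^1(D)$.

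It remains to identify the tangent and normal spaces. From \eqref{tangente} one has $T_{e_1}(K\cdot e_1) = V_2^-(e_1) \oplus V_1(e_1) = i\mathbb{R}\cdot e_1 \oplus V_1(e_1)$; passing to the projectivization, the Fubini--Study tangent space $T_{[e_1]}\mathbb{C}\mathbb{P}^n$ is the Hermitian orthogonal complement of $\mathbb{C}\cdot e_1$ in $V$, namely $V_0(e_1) \oplus V_1(e_1)$, and therefore
\[
T_{[e_1]} S^1(D) = V_1(e_1), \qquad \nu_{[e_1]}S^1(D) = V_0(e_1).
\]
Combining with the eigenspace computation above shows that $\sigma_{e_1}$ acts as $-\mathrm{id}$ on $T_{[e_1]}S^1(D)$ and as $+\mathrm{id}$ on $\nu_{[e_1]}S^1(D)$, which is precisely the extrinsic symmetry condition at $[e_1]$. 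Transporting $\sigma_{e_1}$ by elements of $K$ provides such an isometry at every point, so $S^1(D)$ is extrinsically symmetric and hence its second fundamental form is parallel.

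The only subtle step is the tangent/normal identification under the Fubini--Study structure, since one must carefully quotient $T_{e_1}\mathcal{C}S^1(D)$ by the line $\mathbb{C}\cdot e_1$ and check orthogonality against the Peirce decomposition; the rest is a bookkeeping verification from the definitions and the identification $\mathfrak{k} = Der(V) = Int(V)$.
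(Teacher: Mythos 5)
Your proof is correct and produces literally the same linear map as the paper: the operator that is $+\mathrm{Id}$ on $V_0(e_1)\oplus V_2(e_1)$ and $-\mathrm{Id}$ on $V_1(e_1)$, together with the same identification $T_{[e_1]}S^1(D)=V_1(e_1)$, $\nu_{[e_1]}S^1(D)=V_0(e_1)$ from the Peirce decomposition. The one genuine difference is how you certify that this map is a symmetry of the ambient structure preserving $S^1(D)$: the paper defines $\sigma$ by its eigenvalues on the Peirce spaces and then verifies by direct computation (using the Peirce multiplication rules) that $\sigma\{x\,y\,z\}=\{\sigma x\,\sigma y\,\sigma z\}$, i.e. $\sigma\in Aut(V)$; you instead realize the same operator as $\exp\bigl(\pi i\,D(e_1,e_1)\bigr)$, the exponential of an inner derivation, so membership in $K=Aut^0(V)$ is automatic and no triple-product check is needed. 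Your route buys a cleaner, computation-free justification of the key step (and makes unitarity, hence the induced Fubini--Study isometry, immediate from self-adjointness of $D(e_1,e_1)$); the paper's direct verification is more elementary in that it uses nothing beyond the definition of an automorphism. Both then conclude identically that the induced isometry of $\mathbb{P}(V)$ fixes $[e_1]$, preserves $S^1(D)$, and acts as $-\mathrm{Id}$ on the tangent space and $+\mathrm{Id}$ on the normal space.
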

\begin{proof} The proof consists in an explicit construction of the extrinsic symmetry $\sigma$.
According to equations (\ref{tangente}) and  (\ref{normal}):
\[ T_{\pi(e_1)}S^1(D) = V_1(e_1) \, \, \mbox{ and }\, \, \nu_{\pi(e_1)}(S^1(D))=V_0(e_1) \]
So the extrinsic symmetry $\sigma$ must satisfy
\[ \sigma|_{V_1(e_1)} = -\mathrm{Id}  \, \, \mbox{ and }\, \, \sigma|_{V_0(e_1)} = \mathrm{Id} \, \, .\]

Then we define $\sigma \in \mathrm{End}(V)$ such that the above condition holds and $\sigma|_{V_2(e_1)} = \mathrm{Id}$.

Now it is a straightforward computation to check that for all $x,y,z \in V$ the following holds:
\[ \sigma \{ x \, y \,z \} = \{ \sigma x \, \sigma y \, \sigma z \} \, .\]
Thus, $\sigma \in Aut(V)$. Then $\sigma$ induces an isometry of $\mathbb{P}(V)$ which preserves
$S^1(D)$.
\end{proof}

\section{Proof of the main theorems}

\subsection{Proof of Theorem \ref{Euclideo}}
\begin{proof}
Let $M \subset \mathbb{C}^n$ be a full and irreducible, complex submanifold such that the action of $Hol^*(M, \nabla^{\perp})$  is non transitive on the unit sphere of the normal space. Observe that since $M$ is irreducible, then $Hol^*(M, \nabla^{\perp})$ acts irreducibly on the normal space (cf. \cite{D00}).

  According to \cite[Theorem 4]{CDO}, there exists an irreducible bounded symmetric domain $D \subset \mathbb{C}^{n}$ (realized as a circled domain) such that, locally around a generic point $q$, $M$ may be described as the union of orbits of the isotropy group $K$ of $D$. More preciselly, in a neighbourhood of $q$,
  \begin{equation}
  M=\bigcup_{v\in\nu_0(K\cdot q)}(K\cdot q)_{v}\ .
  \label{union}
  \end{equation}

  Consider the set $W$ of points of maximal rank of $M$ and apply the decomposition (\ref{union}) in a neighbourhood $U$ of a point $q\in W$. Since by Lemma \ref{lemmarankopen} $W$ is open, $U$ can be chosen so that all its points have the same rank.

Let $\Gamma$ be the complexification of $K$. Since $M$ is a complex manifold,  $M$ is (locally) invariant under the action of $\Gamma$. But, from \cite[pg. 253]{KA}, the orbits of $\Gamma$ are the set of all vectors of the same rank. This implies that $U$ is an open subset of the smooth part of the Mok's characteristic cone $\mathcal{C}S^j(D)$ for some $1 \leq j < rank(D) - 1$.

Now, $\mathcal{C}S^j(D)$  is an algebraic variety. Since $M$ is analytic we get that the whole $M$ is an open subset of  $\mathcal{C}S^j(D)_{sm}$.

\vspace{0.5cm}

We will prove the converse by induction on $j$. In order to better illustrate the procedure we shall do first the proof for the case $j=2$, based on the construction made for the first Mok's characteristic cone in section \ref{firstmok}.

 Let $D\subset \mathbb{C}^n$ be an irreducible circled bounded symmetric domain and let $K$ be its isotropy group.

As we have shown in Section \ref{firstmok}, the normal holonomy of $\mathcal{C}S^1(D)$ is non transitive if $rank(D)\geq 3$.

 Consider now $\mathcal{C}S^2(D)$, the cone over the second Mok's characteristic variety and assume that $rank(D)\geq 4$.  We are going to construct an open subset $E_2$ of  $\mathcal{C}S^2(D)_{sm}$ in the same way as we did when we constructed $E_1\subset \mathcal{C}S^1(D)$.

We will keep the notations of Section \ref{firstmok}.  Let now $e_2$ be a primitive tripotent of $V^1$. Then for a small number $\mu_2$ we can construct the (local) submanifold
\[ E_2 := \bigcup_{\xi \in  U_2} (K\cdot(e_1 + \mu_2 e_2))_{\xi} \, \, \]
where $U_2$ is a small open neighborhood of $0$ in $(\nu_0(K\cdot(e_1 + \mu_2e_2)))_{e_1 + \mu_2e_2}$.

From Theorem \ref{holiterada},  the tangent space of $K\cdot(e_1 + \mu_2 e_2)$ is $$T_{e_1+\mu_2e_2}(K\cdot (e_1+\mu_2e_2))=T_{e_1}(K\cdot e_1)\oplus T_{e_2}(K_1\cdot e_2)$$
 and $$(\nu_0 (K\cdot (e_1+\mu_2e_2)))_{e_1+\mu_2e_2}=\mathbb{R}\cdot e_1\oplus\mathbb{R}\cdot e_2.$$
Observe that both subspaces $T_{e_1}(K\cdot e_1) \oplus \mathbb{R}\cdot e_1 $ and $T_{e_2}(K_1\cdot e_2) \oplus \mathbb{R}\cdot e_2$ are complex subspaces of $V$ as we explained in subsection \ref{firstmok}. Indeed, they are the tangent spaces to the respective first Mok's characteristic cone.

So $T_{e_1+\mu_2e_2}E_2 = T_{e_1}(K\cdot e_1) \oplus \mathbb{R}\cdot e_1 \oplus T_{e_2}(K\cdot e_2) \oplus \mathbb{R}\cdot e_2 $ is complex. Hence $E_2$ is a complex submanifold (locally) invariant by the complexification $\Gamma$ of the group $K$. So $E_2$ is open in $\mathcal{C}S^2(D)_{sm}$ since $rank(e_1+\mu_2e_2)=2$.

\vspace{0.3cm}

We determine now the normal holonomy of $E_2$.
Again by Lemma \ref{holiterada} $$(\nu_s(K\cdot(e_1+\mu_2e_2)))_{e_1+\mu_2e_2}=(\nu_s (K_1\cdot e_2))_{e_2}\subset V^1$$ and the action of the normal holonomy of $K\cdot(e_1+\mu_2e_2)$ on the semisimple part of its normal space coincides with the action of $(K_1)_{e_2}$ on $(\nu_s (K_1\cdot e_2))_{e_2}$.

 With the same argument made for $E_1$ in section \ref{firstmok}, we conclude that $$(\nu_s (K_1\cdot e_2))_{e_2}=V^1_0(e_2),$$ where $V^1_0(e_2)$ is the $0$-space associated to the Peirce decomposition of $V^1$ relative to the tripotent $e_2$ and $K_2:=(K_1)_{e_2}$ is the isotropy group of the bounded symmetric domain associated to the simple PHJTS $$V^2:=V^1_0(e_2)=V_0(e_1)\cap V_0(e_2)=V_0(e_1+e_2).$$

Hence by Lemma \ref{lemaholo} and Remark \ref{analitic}, the restricted normal holonomy group of $\mathcal{C}S^2(D)_{sm}$ is the isotropy group of the bounded symmetric domain associated to $V^{2}$. Observe that the rank of $V^{2}$ is different from 1, since otherwise we would have $rank(D)\leq 3$. It then follows that the normal holonomy of $\mathcal{C}S^2(D)_{sm}$ is non transitive.

\medskip

We now prove the general case by induction, repeating the arguments used for $\mathcal{C}S^2(D)$. Our inductive statement is the following:

\textsl{Assume $rank(D)=r$ and fix $1\leq j<r-1$. Then there exist orthogonal primitive tripotents $e_1,\cdots,e_{j}$ and (small) real numbers $\mu_2,\cdots,\mu_{j}$ such that
 if $$y_{j}=e_1+\mu_2e_2+\cdots+\mu_{j} e_{j},$$ then
 \begin{enumerate}
 \item[i)] $\displaystyle (\nu_0 (K\cdot y_{j}))_{y_{j}}=\mathbb{R}\cdot e_1\oplus\cdots\oplus\mathbb{R}\cdot e_{j}$;
     \vspace{0.2cm}
 \item[ii)]
$\displaystyle E_{j}:=\bigcup_{\xi\in U_{j}} (K\cdot y_{j})_{\xi}$
 is an open submanifold of $\mathcal{C}S^{j}(D)_{sm}$, where $U_{j}$ is a small open neighborhood of $0$ in $(\nu_0 (K\cdot y_{j}))_{y_{j}}$;
 \vspace{0.2cm}
 \item[iii)] $\nu _{y_{j}}E_{j}=V^{j}:=V_0(e_1+\cdots+e_{j})$ is a simple PHJTS and the restricted normal holonomy group of $E_j$ is the isotropy group $K_j$ of the circled bounded symmetric domain $D_j$ of rank $r-j$ associated to $V^j$.
 \end{enumerate}}
 \medskip

Then, since $r-j\geq 2$, the normal holonomy of $\mathcal{C}S^j(D)_{sm}$, which by Lemma \ref{lemaholo} is the same as that of $E_j$, is non transitive.

\medskip

Observe that the case $j=1$ was proved in section \ref{firstmok}. Fix then $2\leq j<r-1$ and assume that the above statement is true for $j-1$.
 Choose a primitive tripotent $e_{j}\in V^{j-1}$, a small real number $\mu_{j}$ and set $y_{j}:=y_{j-1}+\mu_{j}e_{j}$. Consider now the submanifold
$$E_j:=\bigcup_{\xi\in U_{j}} (K\cdot y_{j})_{\xi}$$
where $U_{j}$ is a small open neighborhood of $0$ in $(\nu_0 (K\cdot y_{j}))_{y_{j}}$ so that $E_j$ is a submanifold of $\mathbb{C}^n$.

 Then by Lemma \ref{holiterada} and item i) of the induction hypothesis ,
 $$(\nu_0 (K\cdot y_{j}))_{y_{j}}=\mathbb{R}\cdot e_1\oplus\cdots\oplus\mathbb{R}\cdot e_{j-1}\oplus \mathbb{R}\cdot e_{j}$$ and
$$T_{y_{j}}(K\cdot y_{j})=T_{y_{j-1}}(K\cdot y_{j-1})\oplus T_{y_{j}}(K_{j-1}\cdot e_{j}).$$
On the other hand, $$ T_{y_j}E_j=T_{y_{j}}(K\cdot y_{j})\oplus (\nu_0 (K\cdot y_{j}))_{y_{j}}.$$
Observe that $T_{y_{j-1}}(K\cdot y_{j-1})\oplus(\mathbb{R}\cdot e_1\oplus\cdots \oplus \mathbb{R}\cdot e_{j-1})=T_{y_{j-1}}E_{j-1}$ which is a complex subspace by item iii) of the induction hypothesis.
 Moreover, $T_{y_{j}}(K_{j-1}\cdot e_{j})\oplus \mathbb{R}\cdot e_j$ is also complex, since it is the tangent space at $y_j$ of the first Mok's characteristic cone over the domain $D_{j-1}$ associated to the PHJTS $V^{j-1}$.

We conclude that $T_{y_j}E_j$ is complex. Hence $E_j$ is a complex submanifold (locally) invariant by the complexification $\Gamma$ of the group $K$. So $E_j$ is open in $\mathcal{C}S^j(D)_{sm}$ since $rank(y_j)=j$.

\vspace{0.3cm}

  To compute the normal holonomy of $E_j$, recall that by Lemma \ref{lemaholo} and Lemma \ref{holiterada} $$\nu_{y_j}E_j=(\nu_s(K\cdot y_j))_{y_j}=(\nu_s (K_{j-1}\cdot e_j))_{e_j}\subset V^{j-1}.$$
  By Lemma \ref{lemaholo} the action of the normal holonomy group of $E_j$ on $\nu_{y_j}E_j$ coincides with the action of the normal holonomy of $K\cdot y_j$ on $(\nu_s(K\cdot y_{j}))_{y_j}$. This last action coincides, by Lemma \ref{holiterada}, with the action of the iterated isotropy group  $(K_{j-1})_{e_j}$ on $(\nu_s (K_{j-1}\cdot e_j))_{e_j}$.

  With the same argument as before, we conclude that $(\nu_s (K_{j-1}\cdot e_j))_{e_j}=V^{j-1}_0(e_j)$, where $V^{j-1}_0(e_j)$ is the $0$-space associated to the Peirce decomposition of $V^{j-1}$ relative to the tripotent $e_{j}$ and $K_j:=(K_{j-1})_{e_j}$ is the isotropy group of the bounded symmetric domain associated to the simple PHJTS $$V^j:=V^{j-1}_0(e_j)=V_0(e_1+\cdots +e_j).$$

Hence the normal holonomy group of $E_j$ is the isotropy group $K_j$ of the bounded symmetric domain associated to $V^{j}$ as we wanted to show.
\end{proof}

\begin{rem} \label{equalrank}
Observe that the above construction gives a conceptual simple proof of the fact that the geometric rank of a bounded symmetric domain $D$, defined as the codimension of a principal orbit of $K$, coincides with the rank of the PHJTS $V=T_0D$ associated to it.

Indeed, let $rank(V)=r$ and let $\{e_1,\cdots,e_r\}$ be a frame of $V$, where $e_1,\cdots,e_r$ are chosen as in the previous proof. Set $y=e_1+\mu_2e_2+\cdots+\mu_r e_r$ and consider the orbit $K\cdot y$. Then $K\cdot y$ has flat normal bundle, since the orbit $K\cdot (e_1+\mu_2e_2+\cdots+\mu_{r-1} e_{r-1})$ has transitive normal holonomy on the semisimple part of its normal space. Therefore, $K\cdot y$ is principal (cf. \cite[Theorem 5.4.1]{BCO}).   Moreover, $$\nu_y (K\cdot y)= \nu_0(K\cdot (e_1+\cdots+\mu_re_r))=\mathbb{R}\cdot e_1\oplus\cdots\oplus \mathbb{R}\cdot e_r$$ and therefore the geometric rank of $D$ is $r$.
\end{rem}

\subsection{Proof of Theorems \ref{Projectivo} and \ref{ConosProduct}}
$ $

Here is the proof of Theorem \ref{Projectivo}.

\begin{proof} Let $M\subset \mathbb{C}\mathbb{P}^n$ be a full complex submanifold with irreducible and non transitive normal holonomy.  Let $\mathcal{C}M \subset \mathbb{C}^{n+1}$ be the cone over $M$. Denote by $\pi:\mathbb{C}^{n+1}\to \mathbb{C}\mathbb{P}^n$ the usual projection.

Let $p \in \mathcal{C}M$. According to \cite[Remark 5, page 211]{CDO} the action of the normal holonomy group of the cone $\mathcal{C}M$ at $p$ is the same as the action of the normal holonomy group of $M$ at $\pi(p)$. By Theorem \ref{Euclideo}, $\mathcal{C}M$ is an open subset of a cone $\mathcal{C}S^j(D)_{sm}$ over a Mok's characteristic variety, for some irreducible circled bounded symmetric domain $D$ and some $1\leq j<rank(D)-1$. Then $M$ is an open subset of the smooth part of the Mok's characteristic variety $S^j(D)$.

 \medskip

 Reciprocally, if $M$ is an open subset of the smooth part of the Mok's characteristic variety $S^j(D)$ for $1 \leq j < rank(D) - 1$ the normal holonomy group acts irreducibly but not transitively on the unit sphere as it follows again from Theorem \ref{Euclideo} and  \cite[Remark 5, page 211]{CDO}.
\end{proof}

We give now  the proof of Theorem \ref{ConosProduct}.

\begin{proof} Let $M\subset \mathbb{C}\mathbb{P}^n$ be a full complex submanifold and let $\mathcal{C}M \subset \mathbb{C}^{n+1}$ be the cone over $M$.
Let $p \in \mathcal{C}M$ and $\pi(p)$ its projection to $M$. According to \cite[Remark 5, page 211]{CDO} the action of the normal holonomy group of the cone $\mathcal{C}M$ at $p$ is the same as the action of the normal holonomy group of $M$ at $\pi(p)$.

So the normal space of $\mathcal{C}M$ at $p$ splits as $$\nu(\mathcal{C}M) = \nu_1 \oplus \nu_2 \oplus \cdots \oplus \nu_r$$ where each $\nu_j$ , $j=1,\cdots,r$ is invariant by the normal holonomy group. Then by \cite{D00} the cone  $\mathcal{C}M$ split (locally around $p$) as an extrinsic product of $r$ complex submanifolds $\mathcal{C}M_j \subset \mathbb{C}^{n_j}$,$ j=1,\cdots,r$. The meaning of such splitting is that the submanifolds $\mathbb{C}^{n_j}$, $j=1,\cdots,r$ are affine subspaces of $\mathbb{C}^{n+1}$ and locally around $p \in \mathcal{C}M$, we have
\[ \mathcal{C}M =\mathcal{C} M_1 \times \cdots \times\mathcal{C} M_r \subset \mathbb{C}^{n_1} \times \cdots \times \mathbb{C}^{n_r} =  \mathbb{C}^{n+1}  \, . \]
Since $\mathcal{C}M $ is a cone it follows that each $\mathcal{C}M_j$, $ j=1,\cdots,r$ is an open subset of a cone which we also denote by $\mathcal{C}M_j$. This shows that $\pi(p) \in M$ has a neighborhood which is open in the join  $J(M_1, M_2, \cdots , M_r)$ defined as the union of the lines joining  the projective submanifolds $M_1, M_2, \cdots , M_r$ associated to the cones $\mathcal{C}M_j$, \cite[page 70]{Ha92}.
\end{proof}

\begin{rem} Notice that even if the Riemannian metric on the cone $\mathcal{C}M$ induced by the flat metric of $\mathbb{C}^{n+1}$ is locally a product, the Riemannian metric on the join $J(M_1, M_2, \cdots , M_r)$ induced by the Fubini-Study metric of $\mathbb{C}\mathbb{P}^n$ can be locally irreducible.
\end{rem}

\begin{cor} Let $X \subset \mathbb{C}\mathbb{P}^n$ be a projective variety. Then $X$ is a join if and only if $X$ is projectively equivalent to a variety $X'$ whose normal holonomy group, defined in the smooth Zariski open subset $X'_{sm}$, does not act irreducibly on the normal space.
\end{cor}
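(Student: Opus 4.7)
The plan is to establish the two implications using Theorem \ref{ConosProduct} together with \cite[Remark 5, page 211]{CDO}, which identifies the normal holonomy action of a projective subvariety of $\mathbb{C}\mathbb{P}^n$ with the normal holonomy action of its affine cone in $\mathbb{C}^{n+1}$ at any point above it. I shall assume throughout that $X$ (and hence any $X'$ projectively equivalent to it) is irreducible, so that the smooth part is Zariski dense and connected.

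For the ``only if'' direction, suppose $X = J(Y_1, \dots, Y_r)$ with $Y_i \subset \mathbb{C}\mathbb{P}^{n_i}$ and $\mathbb{C}\mathbb{P}^{n_1},\dots,\mathbb{C}\mathbb{P}^{n_r}$ complementary linear subspaces of $\mathbb{C}\mathbb{P}^n$. Since projective automorphisms act transitively on ordered tuples of complementary linear subspaces of prescribed dimensions, I would choose $\phi \in \mathrm{PGL}(n+1,\mathbb{C})$ lifting to a linear isomorphism of $\mathbb{C}^{n+1}$ that sends the affine spans of the $\mathbb{C}\mathbb{P}^{n_i}$ to mutually orthogonal subspaces with respect to the standard Hermitian inner product. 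Setting $X' = \phi(X)$ and $Y_i' = \phi(Y_i)$, the cone decomposes as the extrinsic Riemannian product
\[ \mathcal{C}X' = \mathcal{C}Y_1' \times \cdots \times \mathcal{C}Y_r' \subset \mathbb{C}^{n_1+1} \oplus \cdots \oplus \mathbb{C}^{n_r+1} = \mathbb{C}^{n+1}. \]
Consequently the normal bundle of $\mathcal{C}X'_{sm}$ splits as an orthogonal direct sum of parallel subbundles (one for each factor), so the normal holonomy of the cone acts reducibly. By the cited remark, the same holds for the normal holonomy of $X'_{sm}$.

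For the ``if'' direction, suppose $X'$ is projectively equivalent to $X$ and that the normal holonomy of $X'_{sm}$ acts reducibly. Then so does the normal holonomy of the cone $\mathcal{C}X'_{sm}$, again by \cite[Remark 5, page 211]{CDO}. Theorem \ref{ConosProduct} then yields, locally around a generic smooth point, full complex subvarieties $M_i \subset \mathbb{C}\mathbb{P}^{n_i}$ with $n = n_1 + \cdots + n_r + (r-1)$ such that an open subset of $X'_{sm}$ sits inside the smooth part of the join $J(M_1,\dots,M_r) \subset \mathbb{C}\mathbb{P}^n$. Since $X'$ is an irreducible projective variety and $X'_{sm}$ is Zariski dense in $X'$, and since the join of projective varieties is a projective variety, passing to Zariski closures gives $X' \subseteq J(M_1,\dots,M_r)$; a dimension comparison coming from the local diffeomorphism between open sets of $X'_{sm}$ and $J(M_1,\dots,M_r)_{sm}$, together with irreducibility of the join, forces equality. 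Because projective equivalences send complementary linear subspaces to complementary linear subspaces, $X = \phi^{-1}(X')$ is a join as well.

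The main obstacle is upgrading the local output of Theorem \ref{ConosProduct} to the global equality $X' = J(M_1,\dots,M_r)$. The theorem produces the $M_i$ only as germs of complex subvarieties in the complementary linear factors; one has to check that they extend to globally defined projective subvarieties and that the Zariski closure of the local join coincides with the algebraic join, rather than with a proper irreducible component. Here analyticity of the normal connection (Remark \ref{analitic}), irreducibility of $X'$, and the fact that the normal splitting is globally parallel on $X'_{sm}$ should combine to uniquely determine the factors and guarantee that the local construction globalizes; making this rigorous is the technical heart of the proof.
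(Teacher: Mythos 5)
The paper states this corollary without proof, presenting it as an immediate consequence of Theorem \ref{ConosProduct}; your proposal supplies exactly the intended argument (orthogonalize the linear factors so that the cone over the join becomes an extrinsic product, for the ``only if'' direction; Theorem \ref{ConosProduct} together with \cite[Remark 5, page 211]{CDO} for the ``if'' direction), so there is no competing proof in the paper to measure it against, and your outline is essentially correct. Two points deserve attention. First, the local-to-global step you flag as the technical heart can be closed without difficulty: rather than taking Zariski closures of the local analytic pieces $M_i$ (whose closures could a priori jump in dimension), take $\mathcal{C}M_j$ to be the Zariski closure of the image of the irreducible cone $\mathcal{C}X'$ under the linear projection onto $\mathbb{C}^{n_j}$. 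The fibers of this projection over the analytic-open product neighborhood contain the $(d-d_j)$-dimensional complementary factor, and since an analytic-open subset of an irreducible variety is Zariski dense, the fiber-dimension theorem forces $\dim \mathcal{C}M_j = d_j$; as $\mathcal{C}X'$ is contained in the product of its projections and both are irreducible of dimension $\sum_j d_j$, they coincide, giving $X' = J(M_1,\dots,M_r)$ globally. Second, in the ``only if'' direction your argument yields reducibility only when at least two of the factors $Y_i$ have nonzero normal space; a join in which all but one factor is a linear subspace (for instance the cone over a plane curve with vertex a point) has the same normal holonomy as the remaining factor, which can be irreducible or even transitive. The statement therefore implicitly requires the join to be nondegenerate in this sense --- a caveat that applies equally to the paper's own formulation and is not a defect of your argument.
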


\vspace{1cm}

{\bf Acknowledgments.} We would like to thank referee\#2 for the deep report, the useful comments (in particular Remark \ref{referee2})
and the list of typos.

\medskip

\noindent
 A.J. Di Scala is member of GNSAGA of INdAM.

\noindent
F. Vittone was partially supported by ERASMUS MUNDUS ACTION 2 programme, through the EUROTANGO II Research Fellowship, PICT 2010-1716 Foncyt and CONICET.

\noindent
The second author would like to thanks Politecnico di Torino for the hospitality during his research stay.

\vspace{1cm}

\begin{center}
\begin{tabular}{lcl}
 Antonio J. Di Scala,&$\qquad$ & Francisco Vittone,\\
 \footnotesize Dipartimento di Scienze Matematiche&$\qquad$ &\footnotesize Depto. de Matem\'atica, ECEN, FCEIA,\\
 \footnotesize Politecnico di Torino,&$\qquad$ &\footnotesize Universidad Nac. de Rosario - CONICET\\
\footnotesize Corso Duca degli Abruzzi, 24&$\qquad$ &\footnotesize Av. Pellegrini 250\\
\footnotesize 10129 Torino, Italy &$\qquad$ &\footnotesize 2000, Rosario, Argentina \\
\footnotesize\href{mailto:antonio.discala@polito.it}{antonio.discala@polito.it}&$\qquad$ & \footnotesize\href{mailto:vittone@fceia.unr.edu.ar}{vittone@fceia.unr.edu.ar}\\
\footnotesize\url{http://calvino.polito.it/~adiscala/} &$\qquad$ &\footnotesize\url{www.fceia.unr.edu.ar/~vittone}
\end{tabular}
\end{center}

\end{document}